\newtheorem{theorem}{Theorem}[section]
\newtheorem{corollary}[theorem]{Corollary}
\newtheorem{lemma}[theorem]{Lemma}
\newtheorem{proposition}[theorem]{Proposition}
\newtheorem{assumption}[theorem]{Assumption}
\newtheorem{problem}[theorem]{Problem}
\theoremstyle{definition}
\newtheorem{definition}[theorem]{Definition}
\theoremstyle{remark}
\newtheorem{remark}[theorem]{\textbf{Remark}}
\newtheorem{example}[theorem]{\textbf{Example}}
\numberwithin{equation}{section}
\newcommand{\E}{\mathbb{E}}
\renewcommand{\P}{\mathbb{P}}
\newcommand{\Q}{\mathbb{Q}}
\newcommand{\di}{\mathrm{d}}
\newcommand{\R}{\mathbb{R}}
\newcommand{\Rp}{\R_+}
\newcommand{\Lc}{\mathcal{L}}
\newcommand{\as}{a.s.}
\newcommand{\dx}{\mathrm{d}x}
\newcommand{\ds}{\mathrm{d}s}
\newcommand{\dt}{\mathrm{d}t}
\newcommand{\F}{\mathcal{F}}
\newcommand{\Fb}{\mathbb{F}}
\newcommand{\Fc}{\mathcal{F}}
\newcommand{\Gc}{\mathcal{G}}
\newcommand{\Gb}{\mathbb{G}}
\newcommand{\Tc}{\mathcal{T}}
\newcommand{\Wb}{\mathbb{W}}
\newcommand{\ind}{\boldsymbol{1}}
\DeclareMathOperator{\supp}{supp}
\newcommand{\RST}{\mathsf{RST}}
\newcommand{\MVM}{\mathsf{MVM}}
\newcommand{\Ep}[1]{\E\left[#1\right]}
\renewcommand{\vec}[1]{\mathbf{#1}}
\newcommand{\Nb}{\mathbb{N}}
\newcommand{\Rc}{\mathcal{R}}
\newcommand{\Pc}{\mathcal{P}}
\newcommand{\Ac}{\mathcal{A}}
\newcommand{\Bc}{\mathcal{B}}
\newcommand{\Wc}{\mathcal{W}}
\newcommand{\cadlag}{c\`adl\`ag}
\definecolor{orange}{rgb}{1,0.3,0.2}
\newcommand{\Db}{\mathbb{D}}
\newcommand{\Pk}{\mathfrak{P}}
\title[A DPP for distribution-constrained optimal stopping]{A Dynamic Programming Principle for distribution-constrained optimal stopping}
\author{Sigrid K\"{a}llblad}
\thanks{
Technische Universität Wien, Wiedner Hauptstrasse 8-10, 1040 Vienna, Austria.
\\ e-mail: \texttt{sigrid.kaellblad@tuwien.ac.at}}
\date{\today}
\begin{document}

\begin{abstract}
	We consider an optimal stopping problem where a constraint is placed on the distribution of the stopping time. Reformulating the problem in terms of so-called measure-valued martingales allows us to transform the marginal constraint into an initial condition and view the problem as a stochastic control problem; we establish the corresponding dynamic programming principle. 
\end{abstract}

\maketitle

\section{Introduction}

	We consider the following optimal stopping problem with a constraint placed on the distribution of the stopping time: given a probability measure $\mu$ on $(0,\infty)$ and a filtered probability space supporting a Brownian motion $(B_t)_{t\ge 0}$, we aim at finding
		\begin{align}\label{intro:problem}
			\sup_{\tau\in\Tc(\mu)}\Ep{c\big(B_\cdot,\tau\big)},
		\end{align}
	where $\Tc(\mu)$ is the set of stopping times with distribution $\mu$ and $c$ is a given measurable cost function satisfying $c(\omega,t)=c(\omega_{\cdot\wedge t},t)$.
	The problem is related to the so-called inverse first-passage-time problem which has a long history; it has also attracted recent attention: see e.g. \cite{bayraktar2016,manu,ekstroem2016}, to which we refer for further motivation, references and an exposition of its role within financial and actuarial mathematics. 
	
	When the underlying filtration is general enough to allow for an independent uniformly distributed random variable, problem \eqref{intro:problem} is equivalent to its weak formulation where the supremum is also taken over potential probability spaces. This observation underlies the approach in Beiglb\"ock et al. \cite{manu}; specifically, identifying stopping times with measures on a certain canonical product space, the existence of an optimiser along with a monotonicity principle characterising the support set of any optimiser is obtained. In turn, for certain classes of cost functions, the latter is used to deduce the existence of so-called barrier-type solutions. 
	Herein, assuming that the filtration is general enough in the above sense, and imposing certain regularity but notably no specific structure on the cost-function, we reformulate the problem in terms of so-called measure-valued martingales which enables addressing the problem as a stochastic control problem; we establish the corresponding dynamic programming principle.
	
	The notion of measure-valued martingales was used by Cox and K\"allblad \cite{cox2015} to study a robust pricing problem where optimisation takes places over a class of martingales (potential market models) satisfying a given terminal marginal constraint (guaranteeing fit to given market data). Each martingale was then identified with the process specifying the conditional distribution of its terminal value. The latter, naturally, belongs to a class of processes taking values in the space of probability measures on $\R$ and satisfy a certain martingale property; they are referred to as measure-valued martingales (MVMs). Crucially, the terminal marginal constraint was then transformed into an initial condition for the corresponding MVMs, which allowed for the problem to be viewed as a stochastic control problem where the conditional law of the underlying appeared as an additional state-process. The problem can thus be addressed via the dynamic programming approach, and in \cite{cox2015} the DPP was established and a HJB equation deduced for the value function. 
	
	The same approach may naturally be applied also to the optimal stopping problem \eqref{intro:problem} and the aim herein is to specify this and prove the associated DPP. Specifically, each stopping time $\tau$ in $\Tc(\mu)$ is identified with the MVM $(\xi_t)_{t\ge 0}$ defined as its conditional distribution given the current information:
	\begin{align*}
		\xi_t=\mathcal{L}(\tau|\Fc_t);
	\end{align*}		
	any such process will satisfy the initial condition $\xi_0=\mu$ along with a martingale property and a certain adaptedness condition corresponding to the stopping-time property of $\tau$.  
	When reformulating the optimal stopping problem as an optimisation problem over such MVMs, the distribution-constraint is then incorporated as an initial condition which allows the problem to be addressed as a stochastic control problem; our main result establishes that the dynamic programming principle holds for this problem. 
	
	We note that in Ankirchner et al. \cite{ankirchner2015} and Miller \cite{miller2015}, the optimal stopping problem was studied under a constraint on the expected value on the stopping time. The conditional expected value of the terminal constraint was then incorporated as an additional state-process and the problem addressed via the use of BSDEs (cf. also \cite{bouchard2012}); at an abstract level our approach thus bears a certain resemblance to theirs. 
	The distribution-constrained problem that we consider has also been studied from a similar perspective by Bayraktar and Miller \cite{bayraktar2016}. Therein, however, the authors consider the strong problem formulation and restrict to the case of atomic constraints whereas we let $\mu$ be a general probability measure on $\R_+$; we also allow for more general cost functions. 
	We note that our method of proof is distinct from theirs. Indeed, the approach in \cite{bayraktar2016} uses that any point in the unit simplex can be written as a linear combination of a finite number of points whose convex hull includes that point; a method which, to us, seems difficult to generalise to arbitrary constraints.
	Moreover, since they consider the strong formulation, their filtration is generated by the Brownian motion alone and additional randomness is obtained by conditioning on the past of the Brownian motion itself, an approach which seems difficult to generalise beyond non-path-dependent cost functions. 	
	Notably, however, as a consequence of our results herein combined with theirs, we obtain that for the class of cost functions considered in \cite{bayraktar2016}, the problem when restricting to the Brownian filtration coincides with the weak formulation. Hence, a-posteriori, we recover their main result as a special case of ours. 
	
	To prove the DPP for (the MVM version of) problem \eqref{intro:problem} we here consider its weak formulation on the associated canonical path space; the canonical framework has previously successfully been used for the study of stochastic control problems by e.g. El Karoui and Tan \cite{karoui2013_1,karoui2013_2}, see also \cite{neufeld2013,nutz2013,zitkovic2014}.
	To account for the fact that we are dealing with measure-valued martingales, (a component of) our canonical space consists of (continuous) functions from $\R_+$ into the space of probability measures on $\R_+$, where we equip the latter with the topology induced by the first Wasserstein distance rendering it a Polish space.
	We may then establish the DPP by proving analyticity, and stability under concatenation and disintegration, of a certain set of measures, and, in turn, apply a version of Jankov-von Neumann's measurable selection theorem. 
	Underlying this approach lies the fact that problem \eqref{intro:problem} is equivalent to its weak formulation; in particular, although there are alternatives to this convention, we herein let the MVMs correspond to the conditional distribution of the (disintegrated) randomised stopping times. 
	
	A-priori assuming continuity of the value-function, and restricting to a class of cost functions admitting a certain (Markovian) structure, we also provide an alternative proof of the DPP. Although this result is a special case of our main result, we choose to report this independent argument for we find it of interest. Specifically, following Bouchard and Touzi \cite{bouchard2011} (see also \cite{bouchard2012}), the idea is to exploit the continuity properties of the value function in order to explicitly construct an approximately optimal kernel and thus circumvent the need for measurable selection arguments. 
	To deal with the measure-valued argument we here borrow ideas from optimal transport theory -- the argument crucially relies on the structure of the first Wasserstein distance. Notably, the class of cost functions for which this argument applies includes the ones considered in \cite{bayraktar2016}. 
	
	The DPP aside, we also study the stability of the distribution-constrained optimal stopping problem in that we establish continuity properties of the value of the problem as a function of the marginal constraint. First, following the arguments used to prove existence of an optimiser in \cite{manu}, we obtain upper semi-continuity of this mapping under rather general assumptions on the cost function. Imposing additional regularity on the cost function, we also establish a certain 'right-continuity' of the value function; the result is of interest since it ensures that the value of the problem with a general constraint can be obtained as the limit of a sequence of approximating atomic problems which are easier to address by use of numerical methods. 
	
	The remainder of the article is organised as follows: in Section 2, we introduce our problem of study and establish the continuity properties. In Section 3.1 we reformulate the problem in terms of so-called adapted measure-valued martingales; we provide the DPP in Section 3.2 and its proof in Section 3.3. The alternative proof of the DPP is deferred to the Appendix.

\section{The distribution-constrained optimal stopping problem}

	\subsection{The problem of study: definition and first remarks}
	
	Given a fixed law $\mu\in\Pc$, where $\Pc$ denotes the set of probability measures on $(0,\infty)$ with finite first moment, and a filtered probability space $(\Omega,\Gc,(\Gc_t)_{t\ge 0},\P)$ supporting a Brownian motion $(W_t)_{t\ge 0}$ and an independent $\Gc_0$-measurable random variable $U\sim U[0,1]$, we consider the following distribution-constrained optimal stopping problem:
		\begin{align}\label{problem}
			v(\mu)=\sup_{\tau\in\Tc(\mu)}\Ep{c\big(W_\cdot,\tau\big)},
		\end{align}
	where $\Tc(\mu)$ is the set of stopping times with given distribution $\mu$, and $c$ is a given measurable cost function from $C_0(\R_+)\times\R_+$ to $\R$ satisfying $c(\omega,t)=c(\omega_{\cdot\wedge t},t)$. We assume throughout that $\Ep{c(W_\cdot,\tau)}$ is well defined and $<\infty$ for all $\tau\in\cup_{\mu\in\Pc}\Tc(\mu)$; and that for all $\mu\in\Pc$, there exists some $\tau\in\Tc(\mu)$ such that $\Ep{c(W_\cdot,\tau)}>-\infty$. In particular, $v$ defined in \eqref{problem} is thus a function from $\Pc$ to $(-\infty,\infty]$.

	\subsubsection{Preliminaries on the weak problem formulation}\label{sec:preliminaries}
	
	For our approach it is crucial that the distribution-constrained problem \eqref{problem} is in fact equivalent to the corresponding weak formulation. To formalise this, following \cite{beiglboeck2016} and \cite{manu}, we will make use of the notion of randomised stopping times; see however also \cite{guo2016,kallblad2015,karoui2013_1,karoui2013_2}. 
	Let $C_0(\R_+)$ denote the space of continuous functions from $\R_+$ to $\R$ starting in zero, equipped with the topology of uniform convergence on compact sets; we denote the Wiener measure on $C_0(\R_+)$ by $\Wb$, and let the filtration $\Fb=(\Fc_t)_{t\ge 0}$ be the usual augmentation of the canonical filtration. 	
	 Following \cite{manu}, see also \cite{beiglboeck2016} to which we refer for further details, we then define the set $\RST(\mu)$ of probability measures on the product space $C_0(\R_+)\times\R_+$, referred to as randomised stopping times with pre-specified law\footnote{In \cite{beiglboeck2016}, sub-probability measures $\gamma$ with $\mathrm{proj}_{C_0(\R_+)}\gamma\le \Wb$ are considered. However, since we here require $\mathrm{proj}_{\R_+}\gamma=\mu$, where $\mu\in\Pc$ has mass $1$, it follows that so has $\gamma$ and it suffices to restrict to so-called 'finite' RST for which $\mathrm{proj}_{C_0(\R_+)}\gamma= \Wb$ and $\tau$ is finite for almost all paths; see also p. 10 in \cite{manu}.}:
			\begin{align*}
				\mathsf{RST}(\mu):=\{\gamma\in\mathsf{Cpl}(\mathbb{W},\mu): \textrm{$A^\gamma_t(\omega):=\gamma_\omega([0,t])$ is $\Fc_t$-measurable, for $t\ge 0$}\},
			\end{align*}					
		where $(\gamma_\omega)_{\omega\in C_0(\R_+)}$ denotes the disintegration of $\gamma$ in the first co-ordinate, and $A^\gamma(\omega)$ is the cumulative distribution function associated with $\gamma_\omega$.  
		
	Denoting by $(B,T)$ the canonical process on $C_0(\R_+)\times\R_+$, according to Lemma 3.11 in \cite{beiglboeck2016}, the distribution-constrained problem may then be viewed as an optimisation problem over randomised stopping times on the canonical space; specifically, given that $(\Omega,\Gc,(\Gc_t),\P)$ supports a Brownian motion and an independent uniformly distributed $\Gc_0$-random variable, problem \eqref{problem} is equivalent to the problem of maximising 
		\begin{align*}
			\E^\gamma\left[c\big(B_\cdot,T\big)\right], \;\;\textrm{over $\gamma\in\mathsf{RST}(\mu)$}. 
		\end{align*} 
	In particular, the specific choice of the probability space $(\Omega,\Gc,(\Gc_t),\P)$ has thus no bearing on the problem. Hence, denoting by $\Ac(\mu)$ the set of all terms of the form $\alpha=(\Omega^\alpha,\Gc^\alpha,\Gb^\alpha,\P^\alpha,(W^\alpha_\cdot),\tau^\alpha)$ such that $(\Omega^\alpha,\Gc^\alpha,\Gb^\alpha,\P^\alpha)$ is a probability space in which $W^\alpha_\cdot$ is a BM and $\tau^\alpha$ a stopping time with $\tau\sim\mu$, the value remains the same if optimising
	\begin{align}\label{eq:weak_original}
		\E^\alpha\big[\big(W^\alpha_\cdot,\tau^\alpha\big)] \;\; \textrm{over all terms $\alpha\in\Ac(\mu)$};
	\end{align} 
	this further illustrates that we are indeed dealing with a weak problem formulation.
	
	Since every randomised stopping time admits a characterisation in terms of its disintegration in the first variable, $(\gamma_\omega)_{\omega\in C_0(\R_+)}$, working on the fixed probability space $(\Omega=C_0(\R_+),\Fc,\Fb,\mathbb{W})$, still denoting the canonical process by $B$, our problem may also equivalently be formulated as optimising 
		\begin{align}\label{eq:problem_formulation_useful}
			\E\left[\int_0^\infty c(B_\cdot,s)\di A^\gamma_s\right],
			\;\;\textrm{with $A^\gamma_t(\omega):=\gamma_\omega([0,t])$},
		\end{align}
	over disintegrated kernels $(\gamma_\omega)$ of $\gamma\in\RST(\mu)$. This last formulation emphasises the following intuitive viewpoint on randomised stopping times: while a standard stopping time assigns a single time to stop to each path, a stopping time depending on some external randomisation may be viewed as a distribution specifying the probability to stop at various points given the observation of a specific path. 
	
	Since the above formulations are all equivalent, we are free to switch between them and will consider the formulation most convenient in each individual situation.

	\subsection{The dependence on the constraint: stability of the value function} 
	
	
	In this section we study the dependence of the problem on the marginal constraint in that we establish continuity properties of the mapping $\mu\mapsto v(\mu)$. Recall that the existence of an optimiser to Problem \eqref{problem} was established in \cite{manu}; a slight variation of their argument yields upper semicontinuity of the value function: 	 
	
	\begin{proposition}\label{lem:usc}
		Suppose that the cost function $c$ is bounded from above and that $t\mapsto c(\omega_{\cdot\wedge t},t)$ is upper semicontinuous for $\mathbb{W}$-a.e. $\omega\in\Omega$. 
		Then, $\mu\mapsto v(\mu)$ is upper semicontinuous on $\Pc$ in the topology induced by $\Wc_1$, the first Wasserstein metric.
	\end{proposition}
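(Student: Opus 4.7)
The plan is to mirror the compactness argument used by Beiglb\"ock et al.\ \cite{manu} to prove existence of an optimiser, adapting it to accommodate a varying marginal. Working in the weak formulation \eqref{eq:problem_formulation_useful}, I would rewrite the problem as $v(\mu) = \sup_{\gamma \in \RST(\mu)} \E^{\gamma}[c(B_\cdot, T)]$ on the Polish space $C_0(\R_+)\times \R_+$. Fix a sequence $\mu_n \to \mu$ in the $\Wc_1$-topology; after passing to a subsequence I may assume $\lim_n v(\mu_n) = \limsup_n v(\mu_n)$ and, without loss of generality, that it is finite. For each $n$ I then select a $1/n$-optimiser $\gamma_n \in \RST(\mu_n)$.

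The next step is to extract a weak limit and identify it as an element of $\RST(\mu)$. Since $\Wc_1$-convergence of $\mu_n$ to $\mu$ implies tightness of $\{\mu_n\}$, and the first marginal of every $\gamma_n$ equals the (tight) Wiener measure $\Wb$, the family $\{\gamma_n\}$ is tight on $C_0(\R_+)\times\R_+$. Passing to a further subsequence (not relabelled) gives $\gamma_n \to \gamma$ weakly for some probability $\gamma$; by continuity of the marginal projections, $\gamma$ has first marginal $\Wb$ and second marginal $\mu$. The only remaining property needed for $\gamma \in \RST(\mu)$ is the adaptedness of $A^\gamma_t(\omega) = \gamma_\omega([0,t])$ to $\Fc_t$; this is the closure property of the class of randomised stopping times under weak convergence established in \cite{beiglboeck2016} and exploited in \cite{manu}, which I would invoke here.

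To conclude, I would pass to the limit in the objective, namely show $\limsup_n \E^{\gamma_n}[c(B_\cdot,T)] \le \E^{\gamma}[c(B_\cdot,T)]$; combined with $\gamma \in \RST(\mu)$ this yields $\limsup_n v(\mu_n) \le v(\mu)$. Since $c$ is bounded above, this is a reverse-Fatou/Portmanteau-type inequality for upper semicontinuous integrands. I expect the main technical difficulty to lie precisely here: the hypothesis provides upper semicontinuity of $c$ only in the time coordinate (for $\Wb$-a.e.\ path), and not jointly in $(\omega, t)$, so a direct application of the Portmanteau theorem is not available. The remedy, which I would follow from \cite{manu}, exploits that all the $\gamma_n$ and $\gamma$ share the common first marginal $\Wb$: disintegrating $\gamma_n = \Wb \otimes \gamma_{n,\omega}$ and choosing a Skorokhod representation for the time kernels conditional on the Brownian path, the $t$-wise upper semicontinuity together with the uniform upper bound yield the required limit inequality via reverse Fatou.
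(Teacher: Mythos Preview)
Your overall architecture matches the paper's proof: pick near-optimisers $\gamma_n\in\RST(\mu_n)$, show tightness via the two marginals, extract a weak limit $\gamma$, identify $\gamma\in\RST(\mu)$ using the closure result of \cite{beiglboeck2016}, and then pass to the limit in the objective. The only point where your proposal diverges is the final step, and there your description contains a subtle error.

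You propose to ``disintegrate $\gamma_n=\Wb\otimes\gamma_{n,\omega}$ and choose a Skorokhod representation for the time kernels conditional on the Brownian path''. This would require that the disintegrations $\gamma_{n,\omega}$ converge weakly to $\gamma_\omega$ for $\Wb$-a.e.\ $\omega$, and that is \emph{not} implied by weak convergence of $\gamma_n$ to $\gamma$, even when all first marginals are $\Wb$. (A standard counterexample: on $[0,1]\times\{0,1\}$ with first marginal Lebesgue, let $\gamma_{n,\omega}$ alternate between $\delta_0$ and $\delta_1$ on dyadic blocks of level $n$; then $\gamma_n\to\gamma$ weakly with $\gamma_\omega=\tfrac12\delta_0+\tfrac12\delta_1$, but the kernels do not converge a.e.) So the pathwise reverse-Fatou argument you sketch cannot be carried out as written.

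The paper handles this step differently and cleanly: it invokes Proposition~2.4 of Jacod--M\'emin \cite{jacod1981} (cf.\ also \cite{guo2016}), which says that on the set of couplings with \emph{fixed} first marginal $\Wb$, the weak topology coincides with the \emph{stable} topology---the coarsest topology making $\gamma\mapsto\E^\gamma[\phi]$ continuous for every bounded measurable $\phi(\omega,t)$ that is continuous in $t$ for each $\omega$. With the cost $c$ bounded above and upper semicontinuous in $t$ for $\Wb$-a.e.\ $\omega$, a Portmanteau argument in the stable topology immediately yields $\limsup_n\E^{\gamma_n}[c]\le\E^\gamma[c]$. Your intuition that ``common first marginal'' is the key is exactly right; the precise tool that converts this into the needed limit inequality is stable convergence, not a conditional Skorokhod embedding.
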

	
	\begin{proof}[Proof of Proposition \ref{lem:usc}]
		Let $(\mu_n)$ be a sequence converging to some $\mu$; w.l.o.g. let $\Wc_1(\mu_n,\mu)$ non-increasing in $n$. In turn, let $(\gamma_n)$ a sequence such that $\gamma_n\in\mathsf{RST}(\mu_n)$ and
			\begin{align}\label{eq:1}
				\lim_{n\to\infty}\E^{\gamma_n}\left[c\big((B)_{\cdot\wedge T},T\big)\right]=\limsup_{n\to\infty} v(\mu_n).
			\end{align} 
		We first show that the sequence $(\gamma_n)$ is tight; for this it suffices to show that its respective projections onto $C_0(\R_+)$ and $\R_+$ are tight. The projections onto $C_0(\R_+)$ all coincide with the Wiener measure and are thus trivially tight. 
		On the other hand, $\gamma_n(T>r)=\mu_n([r,\infty))$. For any $\varepsilon>0$, using that $\mu_n$ converges to $\mu$, we may now choose an $r>0$, such that $\mu_n([r,\infty))<\varepsilon$, for all $n\in\Nb$; it follows that also the projections onto $\R_+$ are tight. 
		
		Let $\gamma$ be an accumulation point; by passing if necessary to a subsequence, we may assume that $\gamma_n\Rightarrow\gamma$.  
		Since $(\omega,t)\mapsto\omega$ is trivially continuous, it follows that $\gamma\big|_{\Omega}=\mathbb{W}$. Further, using the continuity of $(\omega,t)\mapsto t$ combined with the fact that $\mu_n\to_{\Wc_1}\mu$, we obtain for any $g\in C_b(\R_+)$, 
			\begin{align*}
				\int g(t)\gamma\big|_{\R_+}(\di t)
				=
				\E^{\gamma}\left[g(T)\right]
				=
				\lim_{n\uparrow\infty}\E^{\gamma_n}\left[g(T)\right]
				=
				\lim_{n\uparrow\infty}\int g(t)\mu_n(\di t)
				=
				\int g(t)\mu(\di t).
			\end{align*}
		Hence, $\gamma\in\mathsf{Cpl}(\mathbb{W},\mu)$. Finally, using Theorem 3.8 (3) in \cite{beiglboeck2016} we obtain $\gamma\in\mathsf{RST}(\mu)$. 
		
		 Next, according to Proposition 2.4 in \cite{jacod1981} (see also Lemma 4.2 in \cite{guo2016}), on the space $\mathsf{RST}(\mu)$, the weak convergence topology coincides with the stable convergence topology, which is the coarsest topology under which $\gamma\to\E^\gamma[\phi]$ is continuous for all bounded measurable functions $\phi:\Omega\times\R_+\to\R$ such that $t\mapsto\phi(\omega,t)$ is continuous for all $\omega\in\Omega$. Hence, by use of the assumption (and Portmanteau's lemma), we have
			\begin{align}\label{eq:proof_usc}
				v(\mu)
				\ge \E^{\gamma}\left[c\big((B)_{\cdot\wedge T},T\big)\right]
				\ge \limsup_{n\to\infty}\E^{\gamma_n}\left[c(\big(B)_{\cdot\wedge T},T\big)\right],
			\end{align}
		which combined with \eqref{eq:1} yields the required u.s.c. 
	\end{proof}

	Since the value function is concave\footnote{Indeed, let $\mu_1,\mu_2\in\Pc$, $\varepsilon>0$, and $\tau_1,\tau_2$ corresponding $\varepsilon$-optimal stopping times. For $\lambda\in(0,1)$, define $\tau=\ind_{\{U\le \lambda\}}\tau_1+\ind_{\{U>\lambda\}}\tau_2$ for some independent $U\sim U[0,1]$. Then, $\tau\in\mathcal{T}(\mu^\lambda)$ for $\mu^\lambda=\lambda\mu_1+(1-\lambda)\mu_2$, and working on the enlarged space $(\overline\Omega,\overline\Gc,(\overline\Gc_t),\overline\P)$ defined in the proof of Proposition \ref{prop:cont_right}, we obtain
			\begin{align*} 
				v(x,\mu^\lambda)
				\ge 
				\overline\E\left[\ind_{\{U\le \lambda\}}c\big(W_\cdot,\tau_1\big)+\ind_{\{U> \lambda\}}c\big(W_\cdot,\tau_2\big)\right]
				\ge 		
				\lambda v(x,\mu_1)+(1-\lambda)v(x,\mu_2)-2\varepsilon,
			\end{align*}
		which yields the concavity since $\varepsilon$ was arbitrary.}, when restricted to the set of probability measures with support on a finite number of fixed points -- which renders $v$ a function from $\R^n$ to $\R$ (cf. \eqref{eq:v_atomic_case}) -- it follows from Proposition \ref{lem:usc} that $v$ is continuous on the domain where it is finite. 
	The value function is also continuous whenever the problem admits a solution of barrier type; cf. \cite{manu,grass2016,loynes1970,root1969}. 
	In general, however, continuity of the value function is not a-priori clear. 
	Next we consider a class of pay-off functions for which we may nevertheless establish a certain 'right-continuity' of the value function. The result is of importance since it ensures that the value of the problem with a general constraint can be obtained as the limit of a sequence of approximating atomic problems, which are easier to address by use of numerical methods. 
	For this result we consider pay-off functions which satisfy the following additional condition:  
	
	\begin{assumption}\label{ass:special_form} Given a probability space supporting a BM $(W_t)_{t\ge 0}$, and a stopping time $\tau$, there exists a modulus of continuity $\varphi$ such that for any stopping time $\rho\ge\tau$, 
		\begin{align}\label{eq:ass_special_form}
			\left|\E\left[c(W_{\cdot\wedge\tau},\tau)-c(W_{\cdot\wedge\rho},\rho)\right]\right|
			\le \varphi\left(\E\left[|\tau-\rho|\right]\right).
		\end{align}
	\end{assumption} 
	
	\begin{example}\label{ex:ex_special_form}
		Assumption \ref{ass:special_form} holds for example in either of the following cases: 
		\begin{itemize}
		\item $c(\omega,t)=f(\omega_t)$ for some $2$-H\"older continuous function $f$, for then 
			$\E[|f(W_{\tau})-f(W_{\rho})|]
			\le c \E [|W_{\tau}-W_{\rho}|^2]
			= c \E[|\tau-\rho|]$;
		\item $c(\omega,t)=f(\omega^*_t)$ for some $2$-H\"older continuous function $f$, where $\omega^*_t=\sup_{\cdot\le t}\omega_\cdot$, since by Doob's inequality, $\E[|f(W^*_{\tau})-f(W^*_{\rho})|] \le c\E[|W^*_\tau-W^*_\rho|^2]\le 4c \E[|\tau-\rho|]$;
		\item the above two cases when $W$ is replaced by a local martingale with $\langle M\rangle_t\le t$ (i.e. evolving 'slower' than a Brownian motion);
		\item when $\left|c(\omega,t)-c(\omega,s)\right|\le \varphi(|t-s|)$ for a concave modulus of continuity $\varphi$.		
		\end{itemize}
	\end{example}
	
	We denote by $\Rc(\xi)$ the set of measures in $\Pc$ which may be obtained from $\xi$ by moving mass to the right; more precisely, if $\xi'\in\Rc(\xi)$ and $m(\cdot,\di y)$ is an disintegration in the first variable of an optimal coupling with respect to $\Wc_1$ between $\xi$ and $\xi'$, then $m(x,\di y)$ has support only on $[x,\infty)$.
	We then have the following result; we note that its proof bears resemblance to the proof of Lemma 3.1 in \cite{cox2015}.

	\begin{proposition}\label{prop:cont_right}
		Suppose that Assumption \ref{ass:special_form} holds. 		
		Let $\xi^n\in\Rc(\xi)$, $n\in\Nb$, be a sequence such that $\xi^n\to^{\Wc_1}\xi\in\Pc$. 
		Then, $v(\xi)=\lim_{n\to\infty}v(\xi^n)$. 
	\end{proposition}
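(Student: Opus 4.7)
My plan is to prove the two inequalities $\liminf_n v(\xi^n) \ge v(\xi)$ and $\limsup_n v(\xi^n) \le v(\xi)$ separately; the first follows from a direct coupling construction that pushes stopping times to the right, while the reverse inequality is more subtle because of an adaptedness obstruction reflecting the one-sided nature of Assumption \ref{ass:special_form}. As a preliminary, $\Wc_1$-convergence in $\Pc$ yields convergence of first moments, so $\int y\,\xi^n(\di y) - \int y\,\xi(\di y) \to 0$.

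For the $\liminf$ direction, I would fix $\varepsilon > 0$ and pick $\tau \in \Tc(\xi)$ with $\E[c(W_\cdot,\tau)] \ge v(\xi) - \varepsilon$, on a probability space $(\Omega,\Gc,\Gb,\P)$ carrying the BM $W$ and the independent $\Gc_0$-measurable uniform $U$. Let $m_n(x,\di y)$ denote the disintegration in the first variable of the $\Wc_1$-optimal coupling between $\xi$ and $\xi^n$; by definition of $\Rc(\xi)$, $m_n(x,\cdot)$ is supported on $[x,\infty)$. Setting $\rho^n := G_n(\tau,U)$ where $G_n(x,\cdot)$ is the (right-continuous) quantile function of $m_n(x,\cdot)$, I obtain $\rho^n \sim \xi^n$ with $\rho^n \ge \tau$ almost surely. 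This pointwise inequality is precisely what lets me verify $\rho^n$ is a $\Gb$-stopping time: $\{\rho^n \le s\} \subseteq \{\tau \le s\}$, and on the latter event both $\tau$ (as a stopping time) and $U$ (being $\Gc_0$-measurable) are $\Gc_s$-measurable, so $\rho^n = G_n(\tau,U)$ is $\Gc_s$-measurable there. Applying Assumption \ref{ass:special_form} to $\tau \le \rho^n$ then yields
\begin{equation*}
\big|\E[c(W_\cdot,\tau) - c(W_\cdot,\rho^n)]\big| \le \varphi\big(\E[\rho^n-\tau]\big) = \varphi\!\left(\int y\,\xi^n(\di y) - \int y\,\xi(\di y)\right) \to 0,
\end{equation*}
so that $v(\xi^n) \ge \E[c(W_\cdot,\rho^n)] \ge v(\xi) - \varepsilon - o(1)$, and the claim follows upon taking $\liminf$ and letting $\varepsilon \to 0$.

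The reverse inequality is the hard direction. The naive analogue --- constructing $\sigma^n \in \Tc(\xi)$ with $\sigma^n \le \tau^n$ from an $\varepsilon$-optimal $\tau^n \in \Tc(\xi^n)$ via the reverse disintegration of the coupling (which would be supported on $[0,y]$ given $y$) --- breaks down, since any such $\sigma^n$ is a function of the full value of $\tau^n$ and hence anticipates the future of $W$, destroying the stopping time property. The route I would take is to pass to the weak formulation of Section \ref{sec:preliminaries}: pick $\gamma^n \in \RST(\xi^n)$ with $\E^{\gamma^n}[c(B_\cdot,T)] \ge v(\xi^n) - \varepsilon$, note that $(\gamma^n)$ is tight exactly as in the proof of Proposition \ref{lem:usc} (the first marginal equals $\Wb$ for all $n$, while tightness of the second marginal follows from $\xi^n \to \xi$ in $\Wc_1$), extract a subsequential weak limit $\gamma^* \in \RST(\xi)$ by the same identification argument, and pass to the limit in the cost functional. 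This last step is the main obstacle; I would carry it out by exploiting that, in each setting of Example \ref{ex:ex_special_form}, $t \mapsto c(\omega_{\cdot\wedge t},t)$ is pathwise continuous (since $\omega \in C_0(\R_+)$ is continuous), so that stable convergence on $\RST(\xi)$ combined with a uniform-integrability control derived from Assumption \ref{ass:special_form} yields $\limsup_n \E^{\gamma^n}[c] \le \E^{\gamma^*}[c] \le v(\xi)$, thereby concluding $\limsup_n v(\xi^n) \le v(\xi)$.
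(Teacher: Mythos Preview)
Your $\liminf$ argument is essentially identical to the paper's: both take an $\varepsilon$-optimal $\tau\in\Tc(\xi)$, push it forward through the disintegrated $\Wc_1$-optimal coupling using an independent uniform to obtain $\rho^n\ge\tau$ with $\rho^n\sim\xi^n$, check the stopping-time property from $\rho^n\ge\tau$, and apply Assumption~\ref{ass:special_form}. Your computation $\E[\rho^n-\tau]=\int y\,\xi^n(\di y)-\int y\,\xi(\di y)$ agrees with the paper's $\Wc_1(\xi,\xi^n)$ since for $\xi^n\in\Rc(\xi)$ the optimal coupling has $y\ge x$ a.s.

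Where you diverge from the paper is in your treatment of the $\limsup$ direction, which you call ``the hard direction''. In the paper this is the trivial direction: it is dispatched in a single line by invoking Proposition~\ref{lem:usc}, which already establishes upper semicontinuity of $\mu\mapsto v(\mu)$ in $\Wc_1$. Your route --- reproving tightness, extracting a limit in $\RST(\xi)$, and passing to the limit in the cost --- is literally the proof of Proposition~\ref{lem:usc}, so you are reproducing work already done. Moreover, your attempt to weaken the hypotheses by replacing ``$c$ bounded from above'' with ``a uniform-integrability control derived from Assumption~\ref{ass:special_form}'' is not justified: Assumption~\ref{ass:special_form} controls only \emph{differences of expectations} $\E[c(W,\tau)-c(W,\rho)]$, not pathwise tails of $c^+$, so it does not by itself yield the UI needed for the Portmanteau-type step. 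The paper simply takes the hypotheses of Proposition~\ref{lem:usc} as standing (implicitly), and you should do the same: cite Proposition~\ref{lem:usc} directly for $\limsup_n v(\xi^n)\le v(\xi)$ and drop the extended discussion.
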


		\begin{proof}[Proof of Proposition \ref{prop:cont_right}]
			Fix $\varepsilon>0$ and take $\tau\in\Tc(\xi)$ such that $\Ep{c(W,\tau)}\ge v(\xi)-\varepsilon$. 
			In turn, let $\Gamma^n$ such that $\Wc_1(\xi,\xi^n)=\iint |x-y|\Gamma^n(\dx,\di y)$, and let $m^n(x,\di y)$ the family of disintegrated kernels for which $\Gamma^n(\dx,\di y)=\xi(\dx)m^n(x,\di y)$. Let $U\sim U[0,1]$ independent of $\Gc$. Denoting by $M^{n,(-1)}(x,\cdot)$ the right-continuous inverse of $M^n(x,\cdot):=\int_x^\cdot m^n(x,\di s)$, we then define
			\begin{equation*}
				\tau^n:=M^{n,(-1)}(\tau,U).			
			\end{equation*}
		Since $\tau$ is $\Fc_{\tau}$-measurable and $\xi^n\in\mathcal{R}(\xi)$, on the enlarged space $(\overline\Omega,\overline\Gc,(\overline\Gc_t),\overline\P)=(\Omega\times[0,1],\Gc\otimes\mathcal{B}([0,1]),\Gc_t\otimes\mathcal{B}([0,1]),\P\times\mathcal{L})$, we then have that $\tau^n$ is a $(\overline\Gc_\cdot)$-stopping time with $\tau^n\sim\xi^n$. Moreover, 
 		\begin{align}\label{eq:proof_cont}
			\overline\E\left[|\tau-\tau^n|\right]
			&= \overline\E\left[\left|\tau-M^{n,(-1)}(\tau,U)\right|\right]\\
			&= \int_0^\infty|s-t|\xi(\ds)m^n(s,\di t)
			\;=\; \Wc_1\left(\xi,\xi^n\right).\nonumber
		\end{align} 
	We may thus choose $N\in\Nb$ such that $\varphi(\overline\E[\tau^n-\tau])\le \varepsilon$, for all $n\ge N$, where $\varphi$ is the modulus of continuity given in Assumption \ref{ass:special_form}; applying the latter assumption we then obtain
			\begin{align*}
				v(\xi^n)
				\ge \overline\E\left[c(W_\cdot,\tau^n)\right]
				\ge \overline\E\left[c(W_\cdot,\tau)\right] - \varepsilon
				\ge v(\xi)-2\varepsilon,
				\quad\textrm{for $n\ge N$}.  
			\end{align*}
		Since $\varepsilon>0$ was arbitrarily chosen, we obtain $\lim_{n\to\infty}v(\xi^n)\ge v(\xi)$, which combined with Proposition \ref{lem:usc} yields the result. 		
		\end{proof}

	\section{Measure-valued martingales and the DPP}
	
	\subsection{Measure-valued martingales and alternative problem formulation}
	
	The notion of measure-valued martingales was used in \cite{cox2015} to address a robust pricing problem where optimisation took place over a set of martingales satisfying a given marginal constraint. By reformulating the problem in terms of measure-valued martingales, the constraint was turned into an initial condition for an additional measure-valued state process, which enabled addressing the problem as a stochastic control problem. 
	The aim herein in to apply the same approach to the distribution-constrained optimal stopping problem.  
	To see that this is indeed natural, note that for a given $\Fb$-stopping time $\tau$, defining 
		\begin{align*} 
			\xi_t:=\Lc(\tau|\Fc_t),
		\end{align*} 
	we obtain a process $\xi_\cdot$ taking its values in the set of probability measures on $\R_+$, with $\xi_0=\mu$ and $\lim_{t\to\infty}\xi_t=\tau$; it is also a martingale in a sense to be made precise (cf. Definition \ref{def:MVM} below). In addition, the fact that $\tau$ is a stopping time implies that $\xi_\cdot$ has the following property: denoting by $\Pc^s = \{ \mu \in \Pc : \mu = \delta_y, y \in \Rp\}$, it holds that
		\begin{equation}\label{cond_st}
			\inf\{t\ge 0: \xi_t\in\Pc^s\}\le \xi_\infty;
		\end{equation}
	that is, either $\xi_t\in\Pc^s$ or $\supp \xi_t\subseteq [t,\infty)$ a.s. 
	Notably, there is a one-to-one correspondence between the family of stopping times $\tau$ and such measure-valued processes for the former may be recovered from the latter via $\tau=\arg\{t\ge 0:\xi_t=\delta_t\}$. 
	In principle, the distribution-constrained optimal stopping problem may thus, equivalently, be formulated as an optimisation problem over measure-valued martingales $\xi$ satisfying condition \eqref{cond_st} and the initial condition $\xi_0=\mu$.

	For our distribution-constrained stopping problem \eqref{problem}, we need however to consider a larger class of stopping times than the ones discussed above. The way we choose to formalise this, is that we take the \emph{conditional distribution of the disintegrated randomised stopping time} as our controlled variable. From the perspective of the problem formulation in \eqref{problem}, in the simplest case where $\Gb$ is generated by a Brownian motion initially enlarged by an independent uniformly distributed random variable, this corresponds to identifying a $\Gb$-stopping time with the process yielding its conditional law given the Brownian filtration alone; see however Remark \ref{rem:jumping_MVM} below for an alternative approach.
	More precisely, recall from \eqref{eq:problem_formulation_useful} that working on the space $(\Omega=C_0(\R_+),\Fc,\Wb)$, we may choose to view our problem as an optimisation problem over kernels $(\gamma_\omega)_{\omega\in C_0(\R_+)}$ corresponding to disintegrations of (adapted) couplings between $\Wb$ and $\mu$. 
	Viewing each such kernel as a $\Pc$-valued $\Fc$-measurable random variable satisfying the constraint that under $\Wb$ it averages to $\mu$, and aiming at including the latter constraint as an initial condition, we will identify each such kernel with the process yielding its conditional distribution. This allows us to view the problem as an optimisation problem over measure-valued martingales satisfying the constraint $\xi_0=\mu$ and a suitable adaptedness condition. 
	Specifically, we define the following class of optimisation objects: 
	
	\begin{definition}\label{def:MVM}
		Given a filtered probability space supporting an adapted process $(\xi_t)_{t \ge 0}$ with $\xi_t\in \Pc$, we say that
			\begin{itemize}
				\item[-] the process $\xi$ is a {\it measure-valued martingale} (MVM) if $\xi_\cdot(A)$ is a martingale, for any $A\in\Bc(\R_+)$; 
				\item[-] a MVM is \emph{continuous} if $t \mapsto \xi_t$ in continuous in the topology induced by $\Wc_1$, the first Wasserstein metric, for almost all $\omega\in\Omega$;
				\item[-] a MVM is \emph{adapted} if $\xi_t([0,s])=\xi_u([0,s])$ a.s., for all $s\le t\le u$. 
			\end{itemize}
	\end{definition}
	
	For any $A\in\Bc(\R_+)$, $\xi_\cdot(A)$ is trivially a uniformly integrable martingale with a well-defined limit $\xi_\infty(A)$; more pertinently, $\xi_\infty$ defines a probability measure, see Proposition 2.1 in \cite{horowitz1985}. 
	Further, note that the condition that $\xi_\cdot(A)$ is a martingale for any $A\in\Bc(\R_+)$, is equivalent to $\xi(f)$ being a martingale for any $f\in\mathcal{C}_b$; see Remark 2 in \cite{cox2015}.
	In particular, any MVM thus converges \as{} in the sense of weak convergence of measures to a limiting (random) measure $\xi_\infty$. 
	
	We denote by $\MVM(\mu)$ the set of continuous adapted MVMs $\xi$ with $\xi_0=\mu$.
	Our first claim then is that our original problem, indeed, admits the following equivalent formulation: 
	
	\begin{problem}\label{problem_mvm}
		On the given space $(\Omega=C_0(\R_+),\F,\Fb,\Wb)$, consider the problem of maximising
			\begin{equation}\label{eq:problem_mvm}
				\E\left[\int_0^\infty c\left(B_{\cdot\wedge s},s\right)\di A^\xi_s\right]
				\;\textrm{with}\;\;
				A^\xi_t:=\xi_t\left([0,t]\right),
				\;\textrm{over $\xi\in\MVM(\mu)$}.
			\end{equation}
	\end{problem} 
	
	\begin{lemma}\label{lem:proof_original_mvm}
		Problem \ref{problem_mvm} and the problem introduced in \eqref{problem} are equivalent. 
	\end{lemma}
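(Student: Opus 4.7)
The plan is to establish the equivalence by exhibiting a correspondence between $\RST(\mu)$ and $\MVM(\mu)$ that preserves the objective functional. By the discussion in Section \ref{sec:preliminaries}, problem \eqref{problem} coincides with the optimisation of \eqref{eq:problem_formulation_useful} over $\RST(\mu)$, so it suffices to match this latter formulation with Problem \ref{problem_mvm}.

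For the forward direction, given $\gamma\in\RST(\mu)$ with disintegration $(\gamma_\omega)_{\omega\in\Omega}$, I would set $\xi_t(\omega)(A):=\E[\gamma_\cdot(A)\mid\F_t](\omega)$ for $A\in\Bc(\R_+)$. For each such $A$, $\xi_\cdot(A)$ is a uniformly integrable martingale and, on the Brownian filtration, admits a continuous version; selecting a countable family in $C_b(\R_+)$ that separates $\Pc$ and taking a joint continuous modification then produces a version for which $t\mapsto\xi_t$ is a.s.\ weakly continuous, with the upgrade to $\Wc_1$-continuity secured by uniform integrability of the identity map under $\{\xi_t\}$, itself a consequence of the finite first moment of $\mu=\xi_0$. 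The initial condition $\xi_0=\mu$ is immediate from $\mathrm{proj}_{\R_+}\gamma=\mu$, and the MVM-adaptedness condition holds because $A^\gamma_s$ is $\F_s$-measurable, whence for $s\le t\le u$,
\[
\xi_t([0,s])=\E[A^\gamma_s\mid\F_t]=A^\gamma_s=\E[A^\gamma_s\mid\F_u]=\xi_u([0,s]).
\]

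Conversely, given $\xi\in\MVM(\mu)$, the martingale property applied coordinate-wise provides an a.s.\ limit $\xi_\infty$ which is itself a probability measure, as noted after Definition \ref{def:MVM}; I would then define $\gamma\in\mathsf{Cpl}(\Wb,\mu)$ through its disintegration $\gamma_\omega:=\xi_\infty(\omega)$, whose $\R_+$-marginal equals $\E[\xi_\infty]=\xi_0=\mu$. The $\F_t$-measurability of $A^\gamma_t$ is recovered from the MVM-adaptedness condition by letting $u\to\infty$, which yields $\xi_\infty([0,s])=\xi_s([0,s])$ a.s.\ and thus transfers $\F_s$-measurability to $A^\gamma_s$ via the completeness of $(\F_t)$. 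The two objectives then coincide because the same identity forces $A^\xi_t=\xi_t([0,t])=\xi_\infty([0,t])=A^\gamma_t$ a.s., so that Fubini gives
\[
\E\!\left[\int_0^\infty c(B_{\cdot\wedge s},s)\,\di A^\xi_s\right]=\E^\gamma\!\left[c(B_\cdot,T)\right].
\]

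The hard part will be securing the $\Wc_1$-continuity of the MVM in the forward direction. While the continuous modification of each scalar coordinate is routine, the passage to simultaneous weak continuity requires a countable separating family together with a tightness and uniqueness argument along convergent subsequences; strengthening this to $\Wc_1$-continuity further demands uniform-integrability control of the first-moment martingale $t\mapsto\int s\,\xi_t(\di s)$, whose $L^1$-boundedness comes from $\xi_0=\mu\in\Pc$. The remaining steps are essentially bookkeeping once these continuity and measurability properties are in place.
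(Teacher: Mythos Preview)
Your proposal is correct and follows essentially the same route as the paper: reduce to the $\RST(\mu)$ formulation, define $\xi_t(A)=\E[\gamma_\cdot(A)\mid\F_t]$ for the forward direction and $\gamma_\omega=\xi_\infty(\omega)$ for the converse, and verify the MVM properties and equality of objectives. The only cosmetic difference is that the paper dispatches the $\Wc_1$-continuity step by citing an external right-continuity result for MVMs together with the martingale representation theorem on the Brownian filtration, whereas you spell out the underlying mechanism (countable separating family, joint continuous modification, first-moment control for the upgrade from weak to $\Wc_1$); your version is more self-contained but amounts to the same argument.
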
 
		
	\begin{proof} 
		Lemma 3.11 in \cite{beiglboeck2016} and Theorem VI 65 in \cite{dellacherie} (cf. Theorem 3.6 in \cite{beiglboeck2016}) combined implies that Problem \eqref{problem} is equivalent to optimising $\E\left[\int_0^\infty c(B_{\cdot\wedge s},s)\di A^\gamma_s\right]$, with $A^\gamma_t(\omega):=\gamma_\omega([0,t])$, over kernels $(\gamma_\omega)_{\omega\in C_0(\R_+)}$ corresponding to disintegrations in the first variable of measures $\gamma\in\RST(\mu)$; cf. \eqref{eq:problem_formulation_useful}. 
		Viewing each such kernel $(\gamma_\omega)$ as a $\Pc$-valued $\F$-measurable random variable, we define the associated process $(\xi_t)_{t\ge 0}$ by $\xi_t(A):=\E[\gamma_\cdot(A)|\Fc_t]$, for $A\in\Bc(\R_+)$; 
		since $\mu\in\Pc$ the thus defined process $\xi$ is in $\Pc$, and it is a measure-valued martingale, see e.g. Lemma 2.12 in \cite{cox2015} or Theorem 1.3 in \cite{horowitz1985}. 
		Notably, the limit $\xi_\infty:=\lim_{t\to\infty}\xi_t$ exists and $\xi_\infty(\di t;\omega)=\gamma_\omega(\di t)$, for $\Wb$-a.a. $\omega\in\Omega$. 
		In consequence, the objective functions in \eqref{eq:problem_formulation_useful} and \eqref{eq:problem_mvm} evaluated, respectively, with respect to $(\gamma_\omega)$ and $\xi$, coincide. 
		
		Next, note that since $\gamma\in\RST(\mu)$, or, equivalently, the kernels $\gamma_\omega$ average to $\mu$ under $\Wb$, and since $\Fc_0$ is trivial, we obtain $\xi_0=\mu$. 
		Further, according to Remark 4 in \cite{cox2015}, since the filtration $\Fb$ satisfies the usual conditions, any MVM in $\Pc$ admits a version which is right-continuous in the sense that $\xi_\cdot(f)$ is right-continuous for any $1$-Lipschitz function $f$; w.l.o.g., we choose this version. Moreover, since the filtration $\Fb$ is generated by a BM, it follows from the martingale representation theorem that each process $\xi_\cdot(f)$ is in fact continuous, and thus $\xi$ is continuous in the sense of Definition \ref{def:MVM}.		
		Since for any $\gamma\in\RST(\mu)$, $\gamma_\cdot([0,t])$ is $\Fc_t$-measurable, and since $\xi_\infty(\omega;[0,t])=\gamma_\omega([0,t])$ a.s., we also obtain that $\xi_t$ is adapted in the sense of Definition \ref{def:MVM}. 
		In consequence, the thus defined process $\xi$ is indeed a continuous adapted MVM with $\xi_0=\mu$. 
		
		Conversely, for any $\xi\in\MVM(\mu)$, there exists a measure $\gamma\in\RST(\mu)$ such that its disintegrated kernel $(\gamma_\omega)$ satisfies $\gamma_\omega(\dt)=\xi_\infty(\dt;\omega)$ a.s.; we easily conclude.  
	\end{proof}

	\begin{remark}\label{rem:equivalence_weak_MVM}
		We choose in Problem \ref{problem_mvm} to work on the fixed space $(\Omega,\F,\Fb,\Wb)$ but note that this choice is in fact arbitrary and that we could have chosen any filtered probability space $(\Omega,\Gc,\Gb,\P)$ (satisfying the usual conditions) with $\Gc_0$ trivial and supporting a BM $W$.
		Indeed, denote by $(\overline\Omega,\overline\Gc,\overline\Gb,\overline\P)$ the probability space obtained by initially enlarging such a space by an independent uniformly distributed random variable (cf. the proof of Proposition \ref{prop:cont_right}). For any given adapted MVM we may then construct a stopping time $\tau$ in the latter space such that the value of the respective objective functions coincide in that $\overline \E[c(W,\tau)]=\E[\int c(W,s)\di A^\xi_s]$. 
		Conversely, given a stopping time $\tau$ in $(\overline\Omega,\overline\Gc,\overline\Gb,\overline\P)$, by use of the same arguments as used in the proof of Lemma \ref{lem:proof_original_mvm}, we see that defining $\xi_t:=\Lc(\tau|\Gc_t)$ yields an adapted MVM in $(\Omega,\Gc,\Gb,\P)$ for which the corresponding objective functions coincide; notably $\xi_\cdot$ admits a \cadlag{} version and satisfies $\xi_0=\mu$ since $\Gc_0$ is trivial.   
	
	The independence of the specific choice of the probability space for problem \eqref{problem} (cf. Section \ref{sec:preliminaries}) is thus inherited also by the MVM formulation of the problem. 
		In particular, denoting by $\mathcal{K}(\mu)$ the set of all terms $\kappa=(\Omega^\kappa,\Fc^\kappa,\Fb^\kappa,\P^\kappa,W^\kappa,\xi^\kappa)$ such that $(\Omega^\kappa,\Fc^\kappa,\Fb^\kappa,\P^\kappa)$ is a filtered probability space with $\Gc^\kappa_0$ trivial and in which $W^\kappa$ is a BM and $\xi^\kappa$ an adapted MVM with $\xi^\kappa_0=\mu$, we have 
		\begin{align}\label{eq:weak_kappa_form}
			v(\mu)=\sup_{\mathcal{K}(\mu)}\E^\kappa\left[\int c\left(W^\kappa_{\cdot\wedge s},s\right)\di A^{\xi^\kappa}_s\right].
		\end{align} 
	\end{remark}
	
	\begin{remark}\label{rem:jumping_MVM}
		Viewed from the perspective of problem \eqref{problem}, for the simplest case in which $\Gb$ is the filtration generated by a Brownian motion initially enlarged by an independent uniformly distributed random variable, the MVMs considered in Problem \ref{problem_mvm} correspond to $\xi_t:=\mathcal{L}(\tau|\Fc_t)$ where $(\Fc_t)$ denotes the filtration generated by the Brownian motion alone; alternatively, one may consider objects of the form $\eta_t:=\mathcal{L}(\tau|\Gc_t)$. The latter also defines MVMs, which in addition terminate in a singular measure in that $\eta_\infty(\omega)=\delta_{\tau(\omega)}\in\Pc^s$ a.s. Typically, however, we then do not have $\eta_0=\mu$. Nevertheless, without destroying the martingale property, one may extend $\eta$ onto $(-\varepsilon,\infty)$ by defining $\eta_t=\mu$ for $t\in(-\varepsilon,0)$; $\eta$ will then generally possess a jump at time $t=0$ following the realisation of the $\Gc_0$-measurable random variable. Naturally, the results herein may also be formulated using such MVMs; we consider however the present formulation to be most natural for our purposes and do not pursue those details any further; see, however, Corollary \ref{cor:strong_dpp} below. 
	\end{remark}

	\subsection{The Dynamic Programming Principle}
	
	The aim in this section it to establish the dynamic programming principle for the distribution-constrained optimal stopping problem in its equivalent form \eqref{eq:problem_mvm}. To this end we introduce the associated value function. We continue to work on the space $(\Omega=C_0(\R_+),\Fc,\Fb,\Wb)$, although in light of Remark \ref{rem:equivalence_weak_MVM}, we note that this is a somewhat arbitrary choice. 
	Specifically, we define $v:\R_+\times C_0(\R_+)\times\Pc\to\R$ by 
		\begin{align}\label{eq:value-function}
			v(t,\vec{w},\xi)
			:=\sup_{\xi_\cdot\in\MVM^t(\xi)}\E\left[\int_t^\infty c\left(W^{t,\vec{w}}_{\cdot\wedge u},u\right)\di A^\xi_u\right],
		\end{align}
	where $\MVM^t(\xi)$ denotes the set of continuous adapted MVMs with $\xi_t=\xi$ a.s., and $W^{t,\vec{w}}_\cdot$ denotes the solution to the SDE $\di W^{t,\vec{w}}_s=\di B_s$, for $s\in[t,\infty)$, with initial condition $W^{t,\vec{w}}_s=\vec{w}_s$ a.s., for $s\in[0,t]$.

	Our main result is the following; we use the convention $\E[\Xi]=\E[\Xi^+]-\E[\Xi^-]$ with $\infty-\infty=-\infty$, for any random variable $\Xi$: 
	
	\begin{theorem}[Dynamic Programming Principle]\label{thm:dpp}
	For all $(t,\vec{w},\xi)\in\R_+\times C_0(\R_+)\times\Pc$, and any $\Fb$-stopping time $\theta$ with values in $(t,\infty)$, it holds that
		\begin{align}\label{eq:dpp}
			v(t,\vec{w},\xi)
			=\sup_{\xi_\cdot\in\MVM^t(\xi)}
			\E\left[
			\int_t^\theta c\left(W^{t,\vec{w}}_{\cdot\wedge u},u\right)\di A^\xi_u
			+ v\left(\theta,W^{t,\vec{w}}_{\cdot\wedge\theta},\xi_\theta\right)\right].
		\end{align}		 
	\end{theorem}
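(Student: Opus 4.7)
The plan is to establish the DPP on a canonical path space and invoke the measurable selection theorem of Jankov--von Neumann, following the framework of El Karoui and Tan adapted to the measure-valued setting. I would take the canonical space $\Omega^* := C_0(\R_+)\times C(\R_+;\Pc)$ with canonical process $(B,\Xi)$ and canonical right-continuous filtration $\Fb^*$, and, for each initial datum $(t,\vec{w},\xi)$, consider the set $\Pk(t,\vec{w},\xi)$ of probability measures $P$ on $\Omega^*$ under which $B_{\cdot\wedge t}=\vec{w}_{\cdot\wedge t}$ $P$-a.s., $B$ is a Brownian motion on $[t,\infty)$ with respect to $(\Fb^*,P)$, and $\Xi$ is a continuous adapted MVM with $\Xi_t=\xi$ $P$-a.s. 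By Lemma \ref{lem:proof_original_mvm} and Remark \ref{rem:equivalence_weak_MVM}, the value function coincides with $\sup_{P\in\Pk(t,\vec{w},\xi)}\E^P\bigl[\int_t^\infty c(B_{\cdot\wedge u},u)\,\di A^\Xi_u\bigr]$, and all further reasoning can be carried out on the canonical space.

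The ``$\le$'' direction of \eqref{eq:dpp} follows by disintegration: given any $P\in\Pk(t,\vec{w},\xi)$ and a regular conditional probability $(P^\theta_{\omega^*})$ of $P$ given $\Fc^*_\theta$, I would verify that for $P$-a.e.\ $\omega^*$ the measure $P^\theta_{\omega^*}$ lies in $\Pk(\theta(\omega^*),B(\omega^*)_{\cdot\wedge\theta},\Xi_\theta(\omega^*))$; the Brownian property of the shifted coordinate is standard, the martingale property of $\Xi_\cdot(A)$ passes to the conditional law, and the adaptedness condition survives because it is an a.s.\ identity involving only $\Fc^*_u$ and $\Fc^*_t$ with $t,u\ge\theta$. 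The tower property then bounds $\E^P\bigl[\int_t^\infty c\,\di A^\Xi\bigr]$ by the right-hand side of \eqref{eq:dpp}. For the ``$\ge$'' direction I would first show that the graph $\{(t,\vec{w},\xi,P):P\in\Pk(t,\vec{w},\xi)\}$ is Borel (hence analytic) in the product Polish space, since the martingale, adaptedness, continuity and marginal conditions can each be expressed as countable intersections of Borel sets indexed by a countable convergence-determining subset of $C_b$ and rational times. This yields upper semianalyticity of $v$, so that Jankov--von Neumann produces, for every $\varepsilon>0$, an $\Fc^*_\theta$-measurable kernel $\omega^*\mapsto Q^\varepsilon_{\omega^*}\in\Pk(\theta(\omega^*),B(\omega^*)_{\cdot\wedge\theta},\Xi_\theta(\omega^*))$ which is $\varepsilon$-optimal for $v(\theta,B_{\cdot\wedge\theta},\Xi_\theta)$. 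Pasting $Q^\varepsilon_\cdot$ onto an arbitrary $P\in\Pk(t,\vec{w},\xi)$ at time $\theta$ produces a measure $\widetilde P^\varepsilon\in\Pk(t,\vec{w},\xi)$ whose value exceeds the right-hand side of \eqref{eq:dpp} minus $O(\varepsilon)$, and sending $\varepsilon\downarrow 0$ gives the bound.

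The main obstacle is the concatenation step itself: one must verify that pasting preserves membership in $\Pk$ despite the measure-valued component. The Brownian component is routine, while for $\Xi$ the crucial points are that $Q^\varepsilon_{\omega^*}$ is selected so that its initial value at $\theta$ agrees with $\Xi_\theta(\omega^*)$ -- which is built into the definition of $\Pk(\theta,\cdot,\Xi_\theta)$ -- and that the continuity of $\Xi$ at $\theta$ ensures no jump is introduced at the splicing time; this is exactly why we work with continuous MVMs rather than the jumping variant of Remark \ref{rem:jumping_MVM}. The martingale property of $\Xi_\cdot(A)$ across $\theta$ then follows from the tower property combined with the fact that $Q^\varepsilon_\cdot$ is itself an MVM starting from $\Xi_\theta$, while adaptedness across $\theta$ reduces to an almost-sure identity that holds separately before and after $\theta$. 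The additional technical work compared with the classical El Karoui--Tan setting is essentially bookkeeping: Polishness of $\Pc$ under $\Wc_1$ and the existence of a countable convergence-determining family of Lipschitz test functions reduce the analyticity of the graph of $\Pk$ and the upper semianalyticity of the integrand $\int_t^\theta c\,\di A^\Xi + v(\theta,B_{\cdot\wedge\theta},\Xi_\theta)$ to verifications of exactly the same form as in the finite-dimensional case.
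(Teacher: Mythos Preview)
Your proposal is correct and follows essentially the same strategy as the paper: both establish the DPP on a canonical Polish path space via analyticity of the admissible-law graph, stability under disintegration and concatenation, and Jankov--von Neumann measurable selection. The one notable technical difference is that the paper first reduces the Lagrange-form cost to Mayer form by adjoining an auxiliary real-valued state $Y_u = y + \int_t^u c(W_{\cdot\wedge s},s)\,\di A^\xi_s$ and works on the \cadlag{} path space with values in $\R\times\R\times\Pc$, so that the objective becomes $\E^\P[G]$ for a single fixed functional $G=\lim Y_t$; this streamlines the upper-semianalyticity step relative to your direct handling of the running integral, but the overall architecture is the same.
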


	\begin{remark}\label{rem:thm_alt_scaling}
		Integrals with respect to $\di A^\xi_\cdot$ are naturally understood in the sense of (path-wise) Lebesgue–Stieltjes integration. Hence, if $\xi\in\Pc$ has an atom at time $t$, it will not contribute to the value function $v(t,\vec{x},\xi)$. Similarly, in the formulation on the DPP \eqref{eq:dpp}, if a given MVM $\xi_\cdot\in\MVM(\xi)$ has an atom at time $\theta$, that atom will contribute to the objective function via the integral from $t$ to $\theta$ rather than via the value function evaluated at time $\theta$.		
		More specifically, the value function only depends on $\vec{w}$ up to time $t$ and $\xi$ on $(t,\infty)$ in the sense that $v(t,\vec{w},\xi)=v(t,\vec{w}_{\cdot\wedge t},\tilde\xi^t)$, where $\tilde\xi^t(A)=\xi(A\cap(t,\infty))$, $A\in\Bc(\R_+)$. For this reason we introduce the notation $\xi^{|_t}$ for the (re-weighted) restriction of $\xi$ to $(t,\infty)$: $\xi^{|_t}(A):=\frac{\xi(A\cap(t,\infty))}{\xi((t,\infty))}$, $A\in\Bc(\R_+)$. 
	It follows from the definition of $v$ that $v(t,\vec{w},\xi)=\xi((t,\infty))v(t,x,\xi^{|_t})$; hence we also obtain the following version of the DPP: 	
	\begin{align*}
			v(t,\vec{w},\xi)
			=\sup_{\xi_\cdot\in\MVM^t(\xi)}
			\E\left[
			\int_t^\theta c\left(W^{t,\vec{w}}_{\cdot\wedge u},u\right)\di A^\xi_u
			+ \xi_\theta\big((\theta,\infty)\big)v\left(\theta,W^{t,\vec{w}}_{\cdot\wedge\theta},\xi_\theta^{|_\theta}\right)\right].
		\end{align*}
	\end{remark}

	Before giving the proof of Theorem \ref{thm:dpp} (in Section \ref{sec:proof_dpp} below) we comment on two particular cases. 
	First, although we consider the (weak) problem formulation studied herein to be the natural one (in particular since it always admits a solution), we note that whenever problem \eqref{problem} admits a 'strong' solution\footnote{
	For example, this is the case when \eqref{problem} admits a barrier solution, which according to \cite{manu} happens whenever $c(\omega,t)=f\circ\omega(t)$, for some $f:\R\to\R$ with $f'''>0$. Notably, for the special case when there exists a barrier solution, the DPP principle is an immediate consequence thereof.
	}
	(here meaning an optimal stopping time adapted to the filtration generated by the Brownian motion alone), Theorem \ref{thm:dpp} implies a stronger result. 
	In order to specify the DPP in this case, recall from Definition \ref{def:MVM} and the subsequent discussion that there exists a limiting (random) measure $\xi_\infty\in\Pc$ \as{} such that $\xi_t \to \xi_\infty$ \as{} as $t\to\infty$, where the convergence is in the sense of weak convergence of measures; further, recall the notation $\Pc^s = \{ \mu \in \Pc : \mu = \delta_y, y \in \Rp\}$. We then define as follows:  	
		
	\begin{definition}\label{def:TMVM}
		A MVM $(\xi_t)$ is {\it terminating} if $\xi_\infty\in \Pc^s$ \as{}; it is {\it finitely terminating} if $\tau^\xi:= \inf \{t \ge 0: \xi_t \in \Pc^s\}$ is almost surely finite.  
	\end{definition}	

	Notably, a terminating MVM satisfies the adaptedness property, if and only if, condition \eqref{cond_st} holds; in consequence, any adapted terminating MVM is, in fact, finitely terminating and we have $\tau^\xi=\arg \{t \ge 0: \xi_t = \delta_t\}$. 
	We denote the set of continuous adapted and finitely terminating MVMs by $\MVM_{\mathrm{term}}$, and define $\MVM_{\mathrm{term}}^t(\xi)$ analogously to above.  
	
	\begin{corollary}[DPP: strong formulation]\label{cor:strong_dpp}
		Suppose that restricting in problem \eqref{problem} to stopping times measurable with respect to the filtration generated by the Brownian motion alone, does not affect the value of the problem. 
		Then, for any $\Fb$-stopping time $\theta$ with values in $(t,\infty)$,
		\begin{align}\label{eq:dpp_cor}
			v(t,\vec{w},\xi) 
			=\sup_{\substack{\xi_\cdot\in\MVM^t_{\mathrm{term}}(\xi)}}
			\E\Big[
			c\left(W^{t,\vec{w}}_{\cdot\wedge \tau^\xi},\tau^\xi\right)\ind_{\left\{\tau^\xi\le\theta\right\}} 
			+v\left(\theta,W^{t,\vec{w}}_{\cdot\wedge\theta},\xi_\theta\right) 
			\ind_{\left\{\tau^\xi>\theta\right\}}\Big].
		\end{align}
	\end{corollary}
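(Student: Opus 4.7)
The plan is to derive the corollary directly from Theorem~\ref{thm:dpp} in two moves. First, I use the hypothesis to restrict the supremum in \eqref{eq:dpp} from $\MVM^t(\xi)$ to $\MVM^t_{\mathrm{term}}(\xi)$; second, I simplify the integrand inside the expectation using the step-function structure of $A^\xi$ when $\xi_\cdot$ is terminating. The link between the two moves is the bijection noted after Definition~\ref{def:TMVM} and in Remark~\ref{rem:jumping_MVM}: continuous adapted finitely terminating MVMs are in one-to-one correspondence with stopping times measurable with respect to the Brownian filtration, via $\xi_t = \Lc(\tau|\Fc_t)$ and $\tau = \tau^\xi$.

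Under this correspondence, the hypothesis of the corollary rephrases as the statement that the supremum defining $v(\mu)$ in \eqref{eq:problem_mvm} is unchanged when taken over $\MVM_{\mathrm{term}}(\mu)$ instead of $\MVM(\mu)$. Since the value function $v(t, \vec{w}, \xi)$ in \eqref{eq:value-function} is itself a distribution-constrained stopping problem of the same form---shifted to start at time $t$, with marginal constraint $\xi^{|_t}$ on $(t,\infty)$ as in Remark~\ref{rem:thm_alt_scaling}, and with the past $\vec{w}$ absorbed into the (still path-dependent) cost---the hypothesis, being a structural property of $c$ rather than of a specific marginal, applies equally to every such sub-problem. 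Consequently the suprema in \eqref{eq:dpp}, both at the outer level and inside $v(\theta, W^{t,\vec{w}}_{\cdot\wedge\theta}, \xi_\theta)$, may be restricted to the corresponding terminating classes without loss.

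For any $\xi_\cdot \in \MVM^t_{\mathrm{term}}(\xi)$, adaptedness combined with $\xi_\infty = \delta_{\tau^\xi}$ gives $\xi_u([0,s]) = \xi_\infty([0,s]) = \ind_{\{\tau^\xi \le s\}}$ whenever $s \le u$, so that $A^\xi_u = \ind_{\{\tau^\xi \le u\}}$ is a unit step at $u = \tau^\xi$. Pathwise Lebesgue--Stieltjes integration then yields
\begin{equation*}
\int_t^\theta c\big(W^{t,\vec{w}}_{\cdot\wedge u}, u\big)\,\di A^\xi_u
= c\big(W^{t,\vec{w}}_{\cdot\wedge \tau^\xi}, \tau^\xi\big)\,\ind_{\{\tau^\xi \le \theta\}},
\end{equation*}
and on $\{\tau^\xi \le \theta\}$ the measure $\xi_\theta = \delta_{\tau^\xi}$ has no mass in $(\theta, \infty)$, so the factorisation $v(\theta,\cdot,\xi_\theta) = \xi_\theta((\theta,\infty))\, v(\theta,\cdot,\xi_\theta^{|_\theta})$ from Remark~\ref{rem:thm_alt_scaling} forces $v(\theta,\cdot,\xi_\theta) = 0$ there. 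Substituting both simplifications into the restricted form of \eqref{eq:dpp} produces \eqref{eq:dpp_cor}.

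The subtle point is the propagation used in the second paragraph: one must verify that the strong/weak equivalence postulated for the initial problem also holds for the sub-problems parametrised by arbitrary $(t, \vec{w}, \xi)$. While intuitively natural, given that the strong/weak gap is a property of the cost function rather than of a specific marginal, formalising it requires a short conditioning argument on the Brownian past $\vec{w}$ together with an interpretation of the hypothesis as applying uniformly over the family of translated cost functions it induces. Once this is granted, all remaining manipulations are routine computations with the Lebesgue--Stieltjes integral and with the definition of adapted terminating MVMs.
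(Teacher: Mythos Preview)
The paper states this result as a corollary of Theorem~\ref{thm:dpp} without supplying a proof, so there is nothing to compare against beyond the implicit claim that the derivation is routine. Your outline is the natural way to fill the gap and is essentially correct: the computations $A^\xi_u=\ind_{\{\tau^\xi\le u\}}$, $\int_t^\theta c\,\di A^\xi=c(W^{t,\vec w}_{\cdot\wedge\tau^\xi},\tau^\xi)\ind_{\{\tau^\xi\le\theta\}}$, and $v(\theta,\cdot,\xi_\theta)=0$ on $\{\tau^\xi\le\theta\}$ are all correct and exactly what is needed.

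One step deserves to be made more explicit. You assert that ``the suprema in \eqref{eq:dpp} \ldots\ may be restricted to the corresponding terminating classes without loss'', but this does not follow \emph{formally} from the pointwise identity $v=v_{\mathrm{term}}$ alone. The inequality (restricted right-hand side) $\le v(t,\vec w,\xi)$ is immediate from Theorem~\ref{thm:dpp}. For the reverse, argue that $v(t,\vec w,\xi)=v_{\mathrm{term}}(t,\vec w,\xi)$ by the hypothesis, and that for any terminating $\xi_\cdot$ the tower property gives
\[
\E\Big[\int_t^\infty c\,\di A^\xi\Big]
\;\le\;
\E\Big[\int_t^\theta c\,\di A^\xi + v\big(\theta,W^{t,\vec w}_{\cdot\wedge\theta},\xi_\theta\big)\Big],
\]
since the continuation of $\xi_\cdot$ after $\theta$ is, conditionally on $\Fc_\theta$, an admissible MVM for the inner problem (this is the ``easy'' half of the DPP and requires no measurable selection; cf.\ \eqref{eq:pseudo_final} in the appendix). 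Chaining gives equality. Note in particular that this uses the hypothesis only at the outer point $(t,\vec w,\xi)$; you do not in fact need to restrict the inner $v(\theta,\cdot)$ to terminating MVMs, so that part of your second paragraph is harmless but unnecessary.

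Your closing caveat about propagating the hypothesis from the original problem \eqref{problem} to arbitrary $(t,\vec w,\xi)$ is well placed: since the corollary is asserted for every such triple, the hypothesis must indeed be read as holding uniformly for the family of shifted problems, and the paper is silent on this point as well.
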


	Second, we consider the case when $\mu$ has support on a finite number of (fixed) atoms $0<t_1<...<t_r$. For $k\in\{1,...,r\}$ and $t\in[t_{r-k},t_{r-k+1})$, let $v_t:C_0(\R_+)\times\Delta_k\to\R$ be given by
		\begin{equation}\label{eq:v_atomic_case}
			v_t(\vec{w},y_{(r-k+1):r}):=v(t,\vec{w},\xi),
			\;\;\textrm{with}\;\; \xi = \sum_{i=r-k+1}^ry_i\delta_{t_i},		
		\end{equation}
	where we use the notation $y_{(r-k+1):r}$ for the vector $(y_{r-k+1},...,y_r)$ and similarly for vector-valued stochastic processes. 
	We then have the following corollary:  
	
	\begin{corollary}[DPP: atomic constraint]\label{cor:dpp}
		Suppose that $\mu$ has support on the finite number of atoms $0<t_1<...<t_r$.
		Then, for $k\in\{1,...,r\}$, with $\mathcal{M}(\Delta_k)$ denoting the set of martingales taking values in $\Delta_k$, we have that
		\begin{align}\label{eq:dpp_cor_2}
			v_{t_{r-k-1}}\left(\vec{w},y_{(r-k):r}\right)
			=&\sup_{\substack{Y\in\mathcal{M}(\Delta_{k+1})\\Y_{t_{r-k-1}}=y_{(r-k):r}}}
			\E\Bigg[Y^{r-k}_{t_{r-k}}c\left(W^{t,\vec{w}}_{\cdot\wedge t_{r-k}},t_{r-k}\right)\\
			&\phantom{........}+\left(1-Y^{r-k}_{t_{r-k}}\right)v_{t_{r-k}}\left(W^{t,\vec{w}}_{\cdot\wedge t_{r-k}},
			\frac{Y^{(r-k+1):r}_{t_{r-k}}}{1-Y^{r-k}_{t_{r-k}}}\right)\Bigg].\nonumber
		\end{align} 
	\end{corollary}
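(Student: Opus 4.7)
The plan is to apply Theorem \ref{thm:dpp} at $t = t_{r-k-1}$ with the choice $\theta = t_{r-k}$ and then re-express the resulting objects in terms of simplex-valued martingales. The main structural observation I will use is that when $\xi_{t_{r-k-1}} = \sum_{i=r-k}^r y_i \delta_{t_i}$ is supported on the finite set $\{t_{r-k},\dots,t_r\}$, any $\xi_\cdot \in \MVM^{t_{r-k-1}}(\xi)$ must remain supported on these atoms at all later times: indeed, for any Borel $A$ disjoint from $\{t_{r-k},\dots,t_r\}$, the non-negative martingale $\xi_\cdot(A)$ starts at zero and is therefore identically zero. Writing $\xi_t = \sum_{i=r-k}^r Y^i_t \delta_{t_i}$ then identifies $\MVM^{t_{r-k-1}}(\xi)$ with the set of $\Delta_{k+1}$-valued martingales $Y$ satisfying $Y_{t_{r-k-1}} = y_{(r-k):r}$. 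The adaptedness condition of Definition \ref{def:MVM} translates into the requirement that $Y^i$ be constant on $\{t \ge t_i\}$, but this only restricts $Y^{r-k}$ beyond $t_{r-k}$ and therefore plays no role in the optimisation on $[t_{r-k-1}, t_{r-k}]$.

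With this identification in hand, the two terms on the right-hand side of \eqref{eq:dpp} can be computed explicitly. Since $\xi_u$ is supported on $[t_{r-k},\infty)$ for every $u \in [t_{r-k-1}, t_{r-k})$, the path $u \mapsto A^\xi_u = \xi_u([0,u])$ vanishes on $[t_{r-k-1},t_{r-k})$ and jumps by $Y^{r-k}_{t_{r-k}}$ at $u = t_{r-k}$; by the integration convention recorded in Remark \ref{rem:thm_alt_scaling}, the Lebesgue--Stieltjes integral in \eqref{eq:dpp} therefore collapses to the single atomic contribution $Y^{r-k}_{t_{r-k}}\, c(W^{t_{r-k-1},\vec{w}}_{\cdot\wedge t_{r-k}}, t_{r-k})$, which is exactly the first summand in \eqref{eq:dpp_cor_2}.

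For the terminal value $v(t_{r-k}, W^{t_{r-k-1},\vec{w}}_{\cdot\wedge t_{r-k}}, \xi_{t_{r-k}})$ I will invoke the rescaling identity $v(t,\vec{w},\xi) = \xi((t,\infty))\, v(t,\vec{w},\xi^{|_t})$ from Remark \ref{rem:thm_alt_scaling}. The renormalised restriction of $\xi_{t_{r-k}}$ to $(t_{r-k},\infty)$ is precisely $\sum_{i=r-k+1}^r \frac{Y^i_{t_{r-k}}}{1 - Y^{r-k}_{t_{r-k}}} \delta_{t_i}$, which lies in $\Delta_k$ and, through the definition \eqref{eq:v_atomic_case}, is encoded by $v_{t_{r-k}}$; this produces the second summand in \eqref{eq:dpp_cor_2}, with the convention that the product is zero on $\{Y^{r-k}_{t_{r-k}} = 1\}$.

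Substituting both identities into \eqref{eq:dpp} and trading the supremum over $\MVM^{t_{r-k-1}}(\xi)$ for the equivalent supremum over $Y \in \mathcal{M}(\Delta_{k+1})$ with $Y_{t_{r-k-1}} = y_{(r-k):r}$ then delivers \eqref{eq:dpp_cor_2}. I do not foresee any genuine obstacle, since the result is essentially a translation of Theorem \ref{thm:dpp} into the finitely-atomic language; the only point calling for some care is the measure-theoretic bookkeeping on $\{Y^{r-k}_{t_{r-k}} = 1\}$, which is handled by the standard null-product convention inherited from the Lebesgue--Stieltjes formulation used throughout.
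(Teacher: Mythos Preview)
Your proposal is correct and follows exactly the route the paper intends: the corollary is stated without its own proof precisely because it is a direct specialisation of Theorem \ref{thm:dpp} with $\theta=t_{r-k}$, combined with the rescaling identity $v(t,\vec{w},\xi)=\xi((t,\infty))\,v(t,\vec{w},\xi^{|_t})$ from Remark \ref{rem:thm_alt_scaling}. Your observations---that an MVM started from a finitely supported measure stays supported on the same atoms (via the non-negative martingale argument), that the adaptedness condition is vacuous on $[t_{r-k-1},t_{r-k}]$, and that the running integral collapses to the single atom at $t_{r-k}$---are precisely the translation steps needed, and your handling of the event $\{Y^{r-k}_{t_{r-k}}=1\}$ via the null-product convention is the correct bookkeeping.
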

	
	\begin{remark}[Relation to Bayraktar and Miller \cite{bayraktar2016}]\label{rem:bayraktar_new}
		In \cite{bayraktar2016}, the authors consider cost functions of the type $c(\omega_{\cdot\wedge s},s)=f\circ\omega(s)$ with $f:\R\to\R$ Lipschitz, and restrict to atomic marginal constraints. Moreover, they consider the strong formulation where the filtration is the one generated by the Brownian motion alone. 
		Their main result (Theorem 1) then establishes \eqref{eq:dpp_cor_2} for their value function $v$; notably they first derive the strong version with the additional condition $Y^{r-k}_{t_{r-k}}\in\{0,1\}$ a.s. (cf. \eqref{eq:dpp_cor}) and then relax this condition. 
		Combining our Corollary \ref{cor:dpp} with their result, we see that for the cost functions and constraints considered in \cite{bayraktar2016}, the value function when restricting to Brownian stopping times coincides with the value function for the weak problem formulation. Hence, a-posteriori, we recover their result as a special case of ours.  
	\end{remark}

	\subsection{Proof of the DPP}\label{sec:proof_dpp}
	
	Since our objective function is given in Lagrange form (with a reward function integrated over time), we first introduce an additional state-variable (governing its accumulated value) in order to transform it onto Mayer form. Specifically, given $(t,\vec{w},y)\in\R_+\times C_0(\R_+)\times\R$ and $\xi_\cdot\in\MVM$, we define the process 
		\begin{align}\label{eq:proof_y}
			Y^{t,\vec{w},y,\xi}_u:=y+\int_t^u c\left(W^{t,\vec{w}}_{\cdot\wedge s},s\right)\di A^\xi_s;
		\end{align}
	we note that it admits a well-defined limit which we denote by $Y^{t,\vec{w},y,\xi}_\infty$. For $(t,\vec{w},y,\xi)\in\R_+\times C_0(\R_+)\times\R\times\Pc$, we introduce the value function
		\begin{align}\label{eq:value_function_z}
			\tilde v(t,\vec{w},y,\xi):=\sup_{\xi_\cdot\in\MVM^t(\xi)}\E[Y^{t,\vec{w},y,\xi}_\infty].
		\end{align} 
	We then have that Theorem \ref{thm:dpp} follows if, for any $\Fb$-stopping time $\theta$ with values in $(t,\infty)$, it holds that
		\begin{align}\label{eq:dpp_z}
			\tilde v(t,\vec{w},y,\xi)=\sup_{\xi_\cdot\in\MVM^t(\xi)}
			\E\left[\tilde v\left(\theta,W^{t,\vec{w}}_{\cdot\wedge \theta},Y^{t,\vec{w},y,\xi}_u,\xi_\theta\right)\right]. 
		\end{align}
	
	Our aim is to prove this result by considering its weak formulation on the associated canonical path space; the canonical framework has previously successfully been used for the study of stochastic control problems in \cite{karoui2013_1,karoui2013_2}, see also \cite{zitkovic2014} and \cite{neufeld2013,nutz2013}, and was used to deduce the DPP also in \cite{cox2015}. 
	
	We denote by $\mathbb{D}$ the set of c\`adl\`ag paths on $[0,\infty)$ taking values in $E:=\R\times\R\times\Pc$, where we equip $\Pc$ with the topology induced by the $\mathcal{W}_1$-metric and $E$ with the product topology; this renders $E$ a Polish space, and using the Skorokhod topology on $\mathbb{D}$ it is a Polish space too\footnote{
	We note that the argument given in this section also works if equipping the set of probability measures on $\R_+$, where $\xi_\cdot$ takes its values, with the weak topology. Hence, in contrast to the robust pricing situation considered in \cite{cox2015}, here the assumption that $\mu$ has a finite first moment could be weakened. We find however this assumption rather natural and in order to stay consistent with the remainder of the paper we keep it throughout.
	}. 
	A generic path in $\mathbb{D}$ is denoted by $\omega$ and we use $X=(W,Y,\xi)$ for the co-ordinate process: $X_t(\omega)=(W_t,Y_t,\xi_t)(\omega)=\omega(t)$; we will also use the notation $\vec{x}$ to refer to a path in $\Db$. 
	 We let $\mathbb{F}^0=\{\Fc^0_t\}_{t\in[0,\infty)}$ denote the filtration generated by the co-ordinate process $X$.
	The set of all probability measures on $\mathcal{B}(\mathbb{D})$ is denoted by $\Pk$; for $(t,\vec{x})\in\R_+\times\Db$, we denote by $\Pk_{t,\vec{x}}$ the set of probability measures $\P$ in $\Pk$ for which
\begin{enumerate}
\item\label{item:1} $X_s=\vec{x}_s$, $0\le s\le t$, $\P$-a.s.;
\item\label{item:3} $(W_u-W_t)_{u\ge t}$ is a $(\Fb^0,\P)$-Brownian motion; 
\item\label{item:2} $(\xi_u)_{u\ge t}$ is an adapted measure-valued $(\Fb^0,\P)$-martingale,
\item\label{item:4} $Y_u=y+\int_t^u c\left(W_{\cdot\wedge s},s\right)\di A^\xi_s$, for $u\ge t$, $\P$-a.s., where $A^\xi_s=\xi_s([0,s])$. 
\end{enumerate}
	Notably, $\Pk_{t,\vec{x}}$ only depends on $\vec{x}$ via $(W_{\cdot\wedge t},Y_t,\xi_t)(\vec{x})$; for ease of notation we keep the former notation. 
	We note that there exists a measurable functional $G:\mathbb{D}\to \R_+$ such that $G(\omega)=\lim_{t\to\infty}Y_t(\omega)$ whenever the limit exists (recall that $c(\cdot,\cdot)$ is measurable and see Lemma 3.12 in \cite{zitkovic2014}); in particular, for any $\P\in\Pk_{t,\vec{x}}$, with $(t,\vec{x})\in\R_+\times\Db$, we have $G=\lim_{t\to\infty}Y_t$, $\P$-a.s.  
	We then define 
	\begin{equation}\label{eq:vdefn}
 		v(t,\vec{x}):=\sup_{\P\in\Pk_{t,\vec{x}}}\E^\P\left[G\right]. 
	\end{equation}

	\begin{lemma}\label{lem:equiv_weak_mvm}
		The value functions defined in \eqref{eq:value_function_z} and \eqref{eq:vdefn} coincide. 
	\end{lemma}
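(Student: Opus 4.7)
The plan is to show $\tilde v(t,\vec w, y, \xi) = v(t,\vec x)$ for any $\vec x \in \Db$ consistent with $(\vec w_{\cdot\wedge t}, y, \xi)$ at time $t$ --- noting that $v(t,\vec x)$ depends only on this data --- by establishing the two inequalities separately, via the standard correspondence between MVMs on the Wiener space and laws on the canonical path space $\Db$.

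For the direction $\tilde v \le v$, I would fix $\xi_\cdot \in \MVM^t(\xi)$ on $(\Omega=C_0(\R_+),\Fc,\Fb,\Wb)$ and consider the $\Db$-valued random variable obtained by concatenating the initial data on $[0,t]$ with the triple $(W^{t,\vec w}_\cdot, Y^{t,\vec w,y,\xi}_\cdot, \xi_\cdot)$ on $[t,\infty)$. Its pushforward $\P$ satisfies conditions (i)--(iv) defining $\Pk_{t,\vec x}$ essentially by construction (in particular (iv) follows from \eqref{eq:proof_y}), and one has $\E[Y^{t,\vec w,y,\xi}_\infty] = \E^\P[G]$; taking the supremum over $\xi_\cdot$ yields the inequality.

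For the reverse direction $v \le \tilde v$, given any $\P \in \Pk_{t,\vec x}$, the tuple $(\Db,\Fc^0_\infty,(\Fc^0_u)_{u\ge t},\P)$ together with $(W_\cdot - W_t)_{\cdot\ge t}$ and $(\xi_\cdot)_{\cdot\ge t}$ defines a filtered probability space from time $t$ onwards supporting a Brownian motion and an adapted MVM starting at $\xi$. Appealing to the probability-space independence established in Remark \ref{rem:equivalence_weak_MVM}, applied to the time-$t$ conditional problem, I would transfer $\P$ onto the Wiener space, producing a continuous adapted MVM in $\MVM^t(\xi)$ with the same expected payoff --- continuity being automatic from martingale representation, as observed in the proof of Lemma \ref{lem:proof_original_mvm}. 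Taking the supremum over $\P$ then yields $v(t,\vec x)\le \tilde v(t,\vec w,y,\xi)$.

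The main technical point lies in justifying the transfer step in the reverse direction: one needs the weak-formulation equivalence, stated globally in Remark \ref{rem:equivalence_weak_MVM}, to carry over to the problem conditional on the data at time $t$. I would do this by re-running the argument from the proof of Lemma \ref{lem:proof_original_mvm} on the shifted Brownian path space $C_0(\R_+)$ started at time $t$: the disintegration of $\xi_\infty$ under $\P$ with respect to the Brownian component yields an element of $\RST(\xi)$ on this shifted space, which by Lemma 3.11 of \cite{beiglboeck2016} can be realised on any probability space supporting an independent uniformly distributed random variable, in particular the Wiener space, where the associated conditional-law process defines the required continuous adapted MVM with initial value $\xi$ at time $t$.
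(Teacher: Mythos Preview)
Your proposal is correct and follows essentially the same strategy as the paper: both arguments rest on the correspondence between adapted MVMs and laws on the canonical space $\Db$, and both invoke Remark~\ref{rem:equivalence_weak_MVM} (together with the argument of Lemma~\ref{lem:proof_original_mvm}) to pass between an arbitrary probability space carrying a Brownian motion and an adapted MVM, and the fixed Wiener space. The paper streamlines the argument in two ways: it first reduces w.l.o.g.\ to $t=y=0$, and it introduces the intermediate weak formulation $\sup_{\mathcal{K}(\mu)}\E[Y^\kappa_\infty]$ over all filtered spaces $\kappa$, showing that both $v(0,\vec x)$ and $\tilde v(0,\vec w,0,\xi)$ equal this common supremum (the first equality being tautological up to checking that properties (i)--(iv) survive passage to the $\P$-augmented filtration $\Fb^\P$, the second being exactly \eqref{eq:weak_kappa_form}). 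Your direct two-inequality argument at general $t$ achieves the same end with slightly more work in the transfer step, but is equally valid.
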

	\begin{proof}
		W.l.o.g., let $t=y=0$, $\xi=\mu$ and $\vec{x}=(0,\vec{w},0,\mu)$.
		First, let $\mathcal{K}(\mu)$ denote the set of all terms $\kappa=(\Omega^\kappa,\Fc^\kappa,\Fb^\kappa,\P^\kappa,W^\kappa,\xi^\kappa)$ such that $(\Omega^\kappa,\Fc^\kappa,\Fb^\kappa,\P^\kappa)$ is a filtered probability space in which $W^\kappa$ is a BM and $\xi^\kappa$ an adapted MVM with $\xi^\kappa_0=\mu$; we denote the associated process defined via \eqref{eq:proof_y} by $Y^\kappa$. 
		We then have $v(0,\vec{x})=\sup_{\mathcal{K}(\mu)}\E[Y^\kappa_\infty]$. Indeed, any multiple $\kappa\in\mathcal{K}(\mu)$ induces a term in $\Pk_{0,\vec{x}}$. Conversely, any probability measure $\P\in\Pk_{0,\vec{x}}$ together with the space $(\Db,\Bc(\Db),\Fb^\P)$ and the canonical process $(W,\xi)$ produces such a multiple, this since the properties (i)-(iv) hold also with respect to the augmented filtration $\Fb^\P$. 
		Next, according to Lemma \ref{lem:proof_original_mvm} and Remark \ref{rem:equivalence_weak_MVM} (cf. \eqref{eq:weak_kappa_form}), we have that $\sup_{\mathcal{K}(\mu)}\E[Y^\kappa_\infty]=v(0,\vec{w},\xi)$. Since $v(0,\vec{w},\xi)=\tilde v(0,\vec{w},0,\xi)$, the result follows. 
	\end{proof}
	
	In order to establish the DPP for problem \eqref{eq:vdefn} we first establish two lemmas.

	\begin{lemma}\label{lem:analytic}
		The set $\Gamma:=\{(t,\vec{x},\P):(t,\vec{x})\in\R_+\times\Db,\; \P\in\Pk_{t,\vec{x}}\}$ is analytic. 
	\end{lemma}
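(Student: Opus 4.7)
The plan is to exhibit $\Gamma$ as a countable intersection of Borel subsets of $\R_+\times\Db\times\Pk$, where $\Pk$ carries the topology of weak convergence, so that $\Gamma$ is Borel and hence analytic. I would decompose $\Gamma=\Gamma_{\mathrm{i}}\cap\Gamma_{\mathrm{ii}}\cap\Gamma_{\mathrm{iii}}\cap\Gamma_{\mathrm{iv}}$ according to the four defining conditions and handle each set in turn, using throughout the fact that for a Borel $\Psi:\Db\to\R$ which is bounded (or nonnegative), the map $(t,\vec{x},\P)\mapsto\int\Psi\,d\P$ is Borel on the product space.

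For (i), I would express the coincidence $X_s(\omega)=\vec{x}_s$, $s\in[0,t]$, $\P$-a.s., as a vanishing condition: $\int d_N(X_{\cdot\wedge t},\vec{x}_{\cdot\wedge t})\,d\P=0$ for every $N$, where $d_N$ is a fixed bounded continuous truncation of a metric compatible with the Skorokhod topology; joint Borel measurability in $(t,\vec{x},\P)$ is immediate. For (ii), I would invoke L\'evy's characterisation: $(W_u-W_t)_{u\ge t}$ is a $\P$-Brownian motion iff both $W_u-W_t$ and $(W_u-W_t)^2-(u-t)$ are $\P$-martingales from time $t$ onwards, which is tested against a countable family of bounded continuous $\F^0_s$-measurable cylinder functionals at rational times $s\le u$, yielding countably many Borel conditions on $(t,\P)$.

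For (iii), the MVM property is equivalent to $\xi_\cdot(f)$ being a $\P$-martingale for every $f$ in a countable dense subset of the unit ball of $1$-Lipschitz functions on $\R_+$ (this suffices because $\Pc$ carries the $\Wc_1$-topology, cf.\ Remark following Definition \ref{def:MVM}); the adaptedness condition $\xi_s([0,r])=\xi_u([0,r])$ is tested at rational $r\le s\le u$. Each is a Borel condition on $(t,\P)$. For (iv), the key observation is that $s\mapsto A^\xi_s(\omega)=\xi_s(\omega)([0,s])$ is increasing with bounded total variation (bounded by $1$), so the Lebesgue--Stieltjes integral $\omega\mapsto\int_t^u c(W_{\cdot\wedge s}(\omega),s)\,dA^\xi_s(\omega)$ is defined path-wise and Borel measurable in $\omega$ for each $(t,u)$, by the same kind of monotone-class argument that underlies Lemma 3.12 of \cite{zitkovic2014} used for the limit $G$. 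Demanding this equality $\P$-a.s.\ at all rational $u\ge t$ then defines a Borel subset of $\R_+\times\Db\times\Pk$; extending to all $u$ by right-continuity is automatic.

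The main technical obstacle will be (iv): pushing the pathwise measurability through when $c$ is merely Borel and not a-priori bounded, so that the stochastic-integral–like functional in the definition of $\Pk_{t,\vec{x}}$ is still a Borel map of $(\vec{x},t,u)$. Once this is established (by truncating $c$ and passing to pointwise limits, which preserves Borel measurability) the intersection of the four Borel conditions above is Borel, hence $\Gamma$ is analytic.
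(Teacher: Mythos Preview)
Your proposal is correct and follows essentially the same strategy as the paper: exhibit $\Gamma$ as a countable intersection of Borel subsets of $\R_+\times\Db\times\Pk$, testing each of the four defining properties at rational times against countable families of test objects, so that $\Gamma$ is Borel and hence analytic. The only cosmetic differences are the paper's specific choices of test objects---it uses general $C_b^2$ functions for the Brownian property and a countable generating family in $\Bc(\R_+)$ for the MVM and adaptedness properties, rather than your L\'evy martingales and Lipschitz functions---and it does not spell out the measurability argument for condition~(iv) in the detail you do.
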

	\begin{proof}
		For $0\le q<r$, $\Xi\in C_b(\Db,\F_q)$, $\varphi\in C_b^2(\R)$ and $A\in\Bc(\R_+)$, we consider the following subsets of $\R_+\times\Db\times\Pk$: 
		\begin{enumerate}
		\item[a)] $\left\{(t,\vec{x},\P): \P\left(X_{r\wedge t}=\vec{x}(r\wedge t)\right)=1\right\}$;
		\item[b)] $\big\{(t,\vec{x},\P): \E^\P\big[\Xi\big(\varphi(W_{r\vee t})-\varphi(W_{q\vee t})-\frac{1}{2}\int_{q\vee t}^{r\vee t}\varphi''(W_u)\di u\big)\big]=0\big\}$;
		\item[c)] $\left\{(t,\vec{x},\P): \E^\P\left[\Xi\left(\xi_{r\vee t}(A)-\xi_{q\vee t}(A)\right)\right]=0\right\}$;
		\item[d)] $\left\{(t,\vec{x},\P): \P\left(\xi_r(A\cap[0,r])=\xi_q(A\cap[0,r]\right)=1\right\}$;
		\item[e)] $\big\{(t,\vec{x},\P): \P\big(Y_{r\vee t}=y+\int_t^{r\vee t} c\left(W_{\cdot\wedge u},u\right)\di A^\xi_u\big)=1\big\}$.
		\end{enumerate}	
		The above sets are all Borel measurable. 
		Furthermore, $\Gamma$ is the intersection of the above sets when $q<r$ are allowed to vary among all the rational numbers in $\R_+$; $\Xi$ and $\varphi$ among countable dense subsets of, respectively, $C_b(\Db,\F_q)$ and $C_b^2(\R)$; and $A$ among a countable collection of sets generating $\Bc(\R_+)$.	
		Indeed, for the adaptedness property of $\xi$, note that for any $t<s<u$ it holds that $\xi_u(A\cap[0,s])=\xi_r(A\cap[0,s])$ for any rational number $r\in(s,u)$, and thus
			\begin{align*}
				\xi_u(A\cap[0,s])=\lim_{r\searrow s, r\in\mathbb{Q}}\xi_r(A\cap[0,s])=\xi_s(A\cap[0,s]),
				\quad \mathrm{a.s.},
			\end{align*}
		where the right-continuity of $\xi$ was used in the last equality; the remaining properties are immediate. 
		In consequence, also $\Gamma$ is Borel and thus analytic. 
	\end{proof}
	
	A map $\Q:\Db\times \mathcal{B}(\mathbb{D})\to[0,1]$ is called a (universally) measurable kernel if  i) $\Q(\omega,\cdot)\in\Pk$ for all $\omega\in\Db$, and ii) $\Db\ni \omega\to\Q(\omega,A)$ is (universally) measurable for all $A\in\mathcal{B}(\mathbb{D})$. 
	Recall that the universal $\sigma$-algebra is the intersection of the completions of the Borel $\sigma$-algebra over all probability measures on the space, and that universally measurable functions are integrable with respect to any such probability measure; we will use the superscript $U$ to refer to the universal completion.  
	We write $\Q_\omega$ for the probability measure $\Q(\omega,\cdot)$ and interpret $\Q$ as a (universally) measurable map $\Db\to\Pk$.	
	Further, given a random time $\theta$ and two paths $\omega,\omega'\in\mathbb{D}$ with $X_{\theta(\omega)}(\omega)=X_{\theta(\omega)}(\omega')$, we define the concatenation $\omega\otimes_\theta\omega'$ to be the element of $\mathbb{D}$ given by 
		\begin{equation*}
  			X_t(\omega\otimes_\theta\omega')
  			=
	  		\ind_{\{t<\theta(\omega)\}} X_t(\omega)
  			+
	  		\ind_{\{t\ge \theta(\omega)\}} X_{t}(\omega').
		\end{equation*}
	Given a probability measure $\P\in\Pk$ and a universally measurable kernel $\Q_\cdot$, we then define the concatenation $\P\otimes_\theta\Q_\cdot$ as the probability measure in $\Pk$ given by 
		\begin{equation*}
  			(\P\otimes_\theta\Q_\cdot)(A)=
  			\iint\ind_A(\omega\otimes_\theta\omega')\Q_{\omega}(\di\omega')\P(\di\omega), 
	  		\quad A\in\mathcal{B}(\mathbb{D}). 
		\end{equation*}

	\begin{lemma}\label{lem:cond_forward_backward} 
		Given $(t,\vec{x})\in\R_+\times\Db$, let $\P\in\Pk_{t,\vec{x}}$ and $\theta$ an $\Fb^0$-stopping time with values in $(t,\infty)$. Then,
		i) there is a family of r.c.p.d. $(\P_\omega)_{\omega\in\Db}$ of $\P$ w.r.t. $\F^0_\theta$ such that $\P_\omega\in\Pk_{\theta(\omega),\omega}$ for $\P$-almost every $\omega\in\Db$;
		ii) given $(\Q_\omega)_{\omega\in\Db}$ such that $\omega\mapsto\Q_\omega$ is $\F^U_\theta$-measurable and $\Q_\omega\in\Pk_{\theta(\omega),\omega}$ for $\P$-almost every $\omega\in\Db$, $\P\otimes_\theta\Q_\cdot\in\Pk_{t,\vec{x}}$. 	
	\end{lemma}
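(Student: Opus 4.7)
The plan is to verify, for both parts, the four defining properties~(1)--(4) of $\Pk_{\cdot,\cdot}$ listed before the lemma, leveraging the countable Borel characterisation established in the proof of Lemma~\ref{lem:analytic}. For part~(i), I exploit the Polish structure of $\Db$ to obtain the r.c.p.d.; for part~(ii), I verify membership via the tower property applied at $\F^0_\theta$.

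For~(i), since $\Db$ is Polish a regular conditional probability $(\P_\omega)_{\omega\in\Db}$ of $\P$ given $\F^0_\theta$ exists. Property~(1) for $\P_\omega$ with base point $(\theta(\omega),\omega)$ is standard: $\P$-a.s.\ the measure $\P_\omega$ is concentrated on paths that agree with $\omega$ up to $\theta(\omega)$, because $X_{\cdot\wedge\theta}$ is $\F^0_\theta$-measurable. For~(2)--(3), the Brownian, martingale and MVM-adaptedness conditions are equivalent to the vanishing of countably many identities of the form appearing in sets~(b)--(d) of the proof of Lemma~\ref{lem:analytic}; each such identity, when tested against $\Xi\in C_b(\Db,\F^0_q)$, transfers from $\P$ to $\P_\omega$ for $\P$-a.e.\ $\omega$ by the tower property, using that $\theta\ge t$ so that the increments in question lie within the range where $\P$ already enjoys the property. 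Intersecting the countable family of exceptional $\P$-null sets yields a single full-measure set on which all test identities hold simultaneously. Property~(4) is a pathwise identity and is inherited directly, the initial value $Y_{\theta(\omega)}(\omega)$ matching $y+\int_t^{\theta(\omega)} c(W_{\cdot\wedge u},u)\,\df A^\xi_u$ for $\P$-a.e.\ $\omega$.

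For~(ii), write $\tilde\P:=\P\otimes_\theta\Q_\cdot$. Property~(1) holds trivially since $\theta>t$. For~(2)--(3), fix rationals $q<r$ and a test object and decompose according to the position of $\theta$: on $\{\theta\ge r\}$ the increment from $q$ to $r$ is governed entirely by the $\P$-piece; on $\{\theta\le q\}$ it is governed by $\Q_\omega$ applied after $\theta(\omega)\le q$, where the martingale property of $\Q_\omega$ gives the cancellation; on $\{q<\theta<r\}$ the increment splits at $\theta$ into two pieces handled respectively by $\P$ and $\Q_\omega$, and the tower property at $\F^0_\theta$ combines them. The Borel characterisation from Lemma~\ref{lem:analytic} again reduces this to countably many verifications. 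Property~(4) follows from additivity $\int_t^u = \int_t^\theta + \int_\theta^u$ of the Lebesgue--Stieltjes integral, where the second summand equals $Y_u-Y_{\theta(\omega)}$ under $\Q_\omega$ by its defining property~(4) at base $(\theta(\omega),\omega)$.

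The main obstacle is the bookkeeping of the MVM-adaptedness condition $\xi_t([0,s])=\xi_u([0,s])$ for $s\le t\le u$ across the concatenation, specifically in the mixed regime $s<\theta(\omega)\le t$, which requires combining the $\P$-adaptedness up to $\theta$ with the $\Q_\omega$-adaptedness from $\theta(\omega)$ onwards, glued through the common value $\xi_{\theta(\omega)}(\omega)$ enforced by Property~(1) for $\Q_\omega$. A secondary technical point is that $\omega\mapsto\Q_\omega$ is only $\F^U_\theta$-measurable rather than Borel; this is harmless since integrals against $\P$ extend unambiguously to the universal completion, so that the tower/Fubini manipulations above remain valid.
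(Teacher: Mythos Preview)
Your proposal is correct and follows essentially the same route as the paper's proof. The only difference is one of packaging: where you spell out the tower-property argument and the three-case decomposition at $\theta$ explicitly, the paper simply invokes Theorem~1.2.10 of Stroock--Varadhan \cite{stroock} for the preservation of the martingale property under disintegration and concatenation, and handles the adaptedness and pathwise property~(iv) by the one-line observation that a set which is $\Q_\omega$-null for $\P$-a.e.\ $\omega$ is $\P\otimes_\theta\Q_\cdot$-null.
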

	
	\begin{proof}
		i) Since $\F^0_\theta$ is countably generated, there is a family of r.c.p.d. $(\P_\omega)_{\omega\in\Db}$ of $\P$ such that $\P_\omega(X_s=\omega_s,\;s\in[0,\theta(\omega)])=1$. 
		Recall from Theorem 1.2.10 in \cite{stroock} that for a given martingale $M$ on $[t,\infty)$, there exists a $\P$-null set $N\in\F^0_\theta$ such that $M$ is a $\P_\omega$-martingale after $\theta(\omega)$ for every $\omega\not\in N$. 
		Since the fact that $W$ is a BM and $\xi$ a MVM are characterised by the martingale property of $M^\varphi_\cdot=\varphi(W_{\cdot\wedge t})-\varphi(W_{t})-\frac{1}{2}\int_{t}^{\cdot}\varphi''(W_u)\di u$ and $\xi_{\cdot\wedge t}(A)$, where $\varphi$ runs through a countable dense subset of $C_b^2(\R)$ and $A$ through a countable algebra generating $\Bc(\R_+)$, we conclude that there exists a $\P$-null set $\tilde N\in\Fc^0_\theta$ such that for every $\omega\not\in\tilde N$, $W$ is a BM and $\xi$ a MVM on $[\theta(\omega),\infty)$ under $\P_\omega$. 
		Further, recall that the adaptedness property of $\xi$ and the property iv) are, respectively, characterised by the fact that $\xi_s(A\cap[0,s])=\xi_u(A\cap[0,s])$ a.s. for all $t\le s\le u$, and  $Y_u=\int_t^uf(X_u)\di A^\xi_u$ a.s. for all $u\ge t$. Since any $\P$-null set is a $\P_\omega$-null set for $\P$-almost all $\omega\in\Omega$, it follows that $\P_\omega\in\Pk_{\theta(\omega),\omega}$ for $\P$-almost every $\omega\in\Db$.
		
		ii) Given $(\Q_\omega)_{\omega\in\Db}$ as specified in the statement of the lemma, applying once again Theorem 1.2.10 in \cite{stroock} (cf. also the proof of Lemma 3.3 in \cite{karoui2013_2}), we obtain that $M^\varphi_{\cdot\wedge t}$ and $\xi_{\cdot\wedge t}(A)$ as defined above are indeed martingales also under $\P\otimes_\theta\Q_\cdot$. Moreover, since a set which is a null set under $\P_\omega$ for $\P$-almost all $\omega\in\Db$, is a null set also under $\P\otimes_\theta\Q_\cdot$, we conclude that also the adaptedness property and property iv) holds under the latter measure; hence, $\P\otimes_\theta\Q_\cdot\in\Pk_{t,\vec{x}}$.  		
	\end{proof}

	 We are now ready to prove the DPP for the weak problem formulation. We note that given the analyticity and stability provided by Lemmas \ref{lem:analytic} and \ref{lem:cond_forward_backward}, the result follows by rather standard arguments; see e.g. Theorem 2.3 in \cite{nutz2013} or Theorem 2.1 in \cite{karoui2013_2}; for completeness we provide the proof.
	 Recall that we use the convention $\E[\Xi]=\E[\Xi^+]-\E[\Xi^-]$ with $\infty-\infty=-\infty$, for any random variable $\Xi$: 

	\begin{theorem}
		For all $(t,\vec{x})\in\R_+\times\Db$, and any $\Fb^0$-stopping time $\theta$ with values in $(t,\infty)$, it holds that
  			\begin{equation*}
    			v(t,\vec{x})=\sup_{\P\in\Pk_{t,\vec{x}}}\E^\P\left[v(\theta,X_{\cdot\wedge\theta})\right].
  			\end{equation*}
	\end{theorem}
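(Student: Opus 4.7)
The plan is to run the canonical-space DPP argument in the spirit of El Karoui--Tan \cite{karoui2013_2} and Nutz \cite{nutz2013}; Lemmas \ref{lem:analytic} and \ref{lem:cond_forward_backward} provide exactly the analyticity and stability-under-conditioning-and-concatenation inputs the argument requires. Before starting, I would verify that $v$ is upper semianalytic in $(t,\vec{x})$: the graph $\Gamma$ is analytic by Lemma \ref{lem:analytic}, the map $(t,\vec{x},\P)\mapsto\E^\P[G]$ is Borel measurable on $\R_+\times\Db\times\Pk$ (with convention $\infty-\infty=-\infty$), and a standard projection argument (cf.\ Bertsekas--Shreve, Propositions 7.47--7.48) identifies the strict upper-level sets of $v$ as analytic. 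In particular $v$ is universally measurable, so $v(\theta,X_{\cdot\wedge\theta})$ is integrable against every element of $\Pk$ and the right-hand side of the DPP is well-defined.

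For the inequality ``$\le$'' I would fix $\P\in\Pk_{t,\vec{x}}$ and, by Lemma \ref{lem:cond_forward_backward}(i), pick an r.c.p.d.\ $(\P_\omega)_\omega$ of $\P$ given $\Fc^0_\theta$ with $\P_\omega\in\Pk_{\theta(\omega),\omega}$ for $\P$-a.e.\ $\omega$. Since $G$ is a single Borel functional on $\Db$ (independent of the underlying measure), the tower property yields
\begin{align*}
\E^\P[G]=\E^\P\big[\E^{\P_\omega}[G]\big]\le \E^\P\big[v(\theta,X_{\cdot\wedge\theta})\big],
\end{align*}
and taking the supremum over $\P$ establishes the claim.

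For the reverse inequality I would use $\varepsilon$-selection. Given $\varepsilon>0$, the Jankov--von Neumann theorem (Bertsekas--Shreve, Proposition 7.50) applied to the analytic set $\Gamma$ and the upper semianalytic objective $(t,\vec{x},\P)\mapsto\E^\P[G]$ produces a universally measurable kernel $(t,\vec{x})\mapsto \Q_{t,\vec{x}}$ with $\Q_{t,\vec{x}}\in\Pk_{t,\vec{x}}$ and $\E^{\Q_{t,\vec{x}}}[G]\ge v(t,\vec{x})-\varepsilon$. Precomposing with the $\Fc^U_\theta$-measurable map $\omega\mapsto(\theta(\omega),\omega_{\cdot\wedge\theta(\omega)})$ yields an $\Fc^U_\theta$-measurable kernel $\omega\mapsto\Q_\omega\in\Pk_{\theta(\omega),\omega}$. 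For an arbitrary $\P\in\Pk_{t,\vec{x}}$, Lemma \ref{lem:cond_forward_backward}(ii) gives $\P\otimes_\theta\Q_\cdot\in\Pk_{t,\vec{x}}$, so by the definition of concatenation and Fubini,
\begin{align*}
v(t,\vec{x})\ge \E^{\P\otimes_\theta\Q_\cdot}[G]=\E^\P\big[\E^{\Q_\omega}[G]\big]\ge \E^\P\big[v(\theta,X_{\cdot\wedge\theta})\big]-\varepsilon.
\end{align*}
Taking the supremum over $\P$ and letting $\varepsilon\downarrow 0$ completes the argument.

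The main obstacle, beyond unpacking the selection machinery, is the bookkeeping around the convention $\infty-\infty=-\infty$ and the fact that $v$ is only universally (not Borel) measurable: one must ensure that disintegration, concatenation, and Fubini remain valid under this convention. The standard workaround is to treat the positive and negative parts of $G$ separately (truncating $G^+$ at level $N$ and passing to the limit $N\to\infty$ via monotone convergence), relying on the standing assumption that $\E[c(W_\cdot,\tau)]<\infty$ on a sufficiently rich class of terms to keep $\E^\P[G^-]$ finite along the constructions.
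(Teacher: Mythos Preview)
Your proposal is correct and follows essentially the same route as the paper: both directions use Lemmas \ref{lem:analytic} and \ref{lem:cond_forward_backward} exactly as you describe, with the r.c.p.d.\ argument for ``$\le$'' and Jankov--von Neumann selection plus concatenation for ``$\ge$''. The only notable difference is cosmetic: the paper handles the possibility $v(t,\vec{x})=\infty$ by replacing $v-\varepsilon$ with the truncated function $v^\varepsilon:=(v-\varepsilon)\ind_{\{v<\infty\}}+\frac{1}{\varepsilon}\ind_{\{v=\infty\}}$ before applying the selection theorem, whereas you defer this to a truncation of $G^+$ at the end; both devices serve the same purpose.
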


	\begin{proof}
	Given $(t,\vec{x})\in\R_+\times\Db$ and an $\Fb^0$-stopping time $\theta$ with values in $(t,\infty)$, let $\P\in\Pk_{t,\vec{x}}$ and recall that according to Lemma \ref{lem:cond_forward_backward}, there exists a family of r.c.p.d. of $\P$ given $\F^0_\theta$, say $(\P_\omega)_{\omega\in\Db}$, such that $\P_\omega\in\Pk_{\theta(\omega),\omega}$ for $\P$-almost every $\omega\in\Db$.  	
	By the properties of the r.c.p.d., we thus obtain	
	\begin{eqnarray*}
    \E^\P[G]
    =\E^\P\left[\E^{\P_\omega}[G]\right]
    \le\E^\P\left[v(\theta,X_{\cdot\wedge\theta})\right],
  	\end{eqnarray*}	
  	where it was used that $\P_\omega\in\Pk_{\theta(\omega),\omega}$ implies that $\E^{\P_\omega}[G]\le v(\theta(\omega),\omega)$, for $\P$-almost every $\omega\in\Db$. 
 	It follows that $v(t,\vec{x})\le\sup_{\P\in\Pk_{t,\vec{x}}}\E^\P\left[v(\theta,X_{\cdot\wedge\theta})\right]$ since  $\P\in\Pk_{t,\vec{x}}$ was arbitrarily chosen. 

	Next, recall that according to Lemma \ref{lem:analytic}, the set $\Gamma=\{(t,\vec{x},\P):\P\in\Pk_{t,\vec{x}}\}$ is analytic. 
	In particular, this implies that $v$ is upper semi-analytic. Indeed, the level sets of $v$ are given by $\{(t,\vec{x})\in\R_+\times\Db:v(t,\vec{x})>c\}=\mathrm{proj}_{\R_+\times\Db}\mathrm{L}^{>c}$, where 
		\begin{align*}
			\mathrm{L}^{>c}:=
			\big\{(t,\vec{x},\P)\in\R_+\times\Db\times\Pk:\E^\P[G]>c\big\}
			\cap
			\Gamma, 
			\;\; c\in\R, 
		\end{align*}
	which are analytic since $\P\mapsto\E^\P[G]$ is Borel measurable. 
	Given $\varepsilon>0$, it follows, in turn, that also the following set is analytic: 
  	\begin{align*}
  		\Gamma_\varepsilon=\{(t,\vec{x},\P):\E^\P[G]\ge v^\varepsilon(t,\vec{x})\}\cap \Gamma,
	\end{align*} 
	where $v^\varepsilon(t,\vec{x}):=(v(t,\vec{x})-\varepsilon)\ind_{\{v(t,\vec{x})<\infty\}}+\frac{1}{\varepsilon}\ind_{\{v(t,\vec{x})=\infty\}}$. 
	Given $(t,\vec{x})\in\R_+\times\Db$ and an $\Fb^0$-stopping time $\theta$ with values in $(t,\infty)$, an application of Jankov-von Neumann's measurable selection theorem then yields the existence of a universally measurable kernel, $(\Q_\omega)_{\omega\in\Db}$, with $\Q_\omega\in\Pk_{\theta(\omega),\omega}$ and $\E^{\Q_\omega}[G]\ge v^\varepsilon(\theta(\omega),\omega)$, for every $\omega\in\Db$; Galmarino's test implies that $\Q_\cdot$ is $\F^U_\theta$-measurable. 
 	Moreover, according to Lemma \ref{lem:cond_forward_backward}, for any $\P\in\Pk_{t,\vec{x}}$ we have $\P\otimes_\theta\Q_\cdot\in\Pk_{t,\vec{x}}$. In consequence, 
	\begin{equation*}
    v(t,\vec{x})
    \ge \E^{\P\otimes_\theta\Q_\cdot}[G]
    = \E^\P\left[\E^{\Q_\omega}\right]
    \ge \E^\P\left[v^\varepsilon(\theta,X_{\cdot\wedge\theta})\right]. 
  	\end{equation*}		
  	Since $\varepsilon>0$ and $\P\in\Pk_{t,\vec{x}}$ were both arbitrarily chosen, we obtain $v(t,\vec{x})\ge\sup_{\P\in\Pk_{t,\vec{x}}}\E^\P\left[v(\theta,X_{\cdot\wedge\theta})\right]$ which completes the proof.
	\end{proof}
	
	By use of the same arguments as in the proof of Lemma \ref{lem:equiv_weak_mvm}, it follows directly from this result that relation \eqref{eq:dpp_z}, and thus relation \eqref{eq:dpp}, holds for any $\theta$ which is a stopping time in the raw filtration generated by the Brownian motion. Theorem \ref{thm:dpp} then follows from the fact that any $\Fb$-stopping time is predictable, and that for any $\Fb$-predictable time $\theta$, there exists a predictable time $\tilde\theta$ in the raw filtration such that $\theta=\tilde\theta$ a.s.

	\appendix
	
	\section{An alternative proof of the DPP} 
	
	A-priori assuming certain continuity of the objective and value-function, we here provide an alternative proof of the dynamic programming principle. Specifically, considering payoff functions which admit a Markovian structure and satisfy \eqref{eq:ass_special_form}, in the spirit of \cite{bouchard2012,bouchard2011} (see also e.g. \cite{aksamit2016}), we circumvent the need for measurable selection theorems by explicitly constructing a measurable optimiser via a covering argument. Although the result is a special case of Theorem \ref{thm:dpp}, we choose to report this independent argument for we find it of interest.
	
	Throughout the appendix we apply the convention of writing $\xi$ for elements in $\Pc$ and $\pmb\xi$ for MVMs; in integrals we sometimes omit the subscript on MVMs and they are then to be understood in the following sense: 
 	$\int...\;\pmb\xi(\ds)=\int...\;\pmb\xi_\infty(\ds)=\int...\;\di A^{\pmb\xi}_s$. 
 	Further, integrals without limits are to be understood as taken from zero to infinity. 
	
	 We say that a cost function $c$ is of \emph{Markov type}, if it admits the representation
	 \begin{align*}
	 c\left(W_{\cdot\wedge\tau},\tau\right)=f\left(X_\tau\right), \;\;\textrm{a.s.}, 
	\end{align*}	  
 	for some $\Fb$-Markov process $(X_t)_{t\ge 0}\in\R^n$ and continuous function $f:\R^n\to\R$; viewed from the perspective of the original problem formulation \eqref{problem}, this ensures a certain Markov structure with respect to the Brownian filtration $\Fb$ (as opposed to $\Gb$). 	 
	For the class of cost functions of Markov type, we then introduce the function $\tilde v:S\to\R$, where $S:=\{(t,x,\xi)\in\R_+\times\R^n\times\Pc:\supp\xi\subseteq[t,\infty)\}$ by
		\begin{align}\label{eq:value-function_app}
			\tilde v(t,x,\xi)
			:=\sup_{\pmb\xi\in\MVM^t(\xi)}\E\left[\int f\left(X^{t,x}_s\right)\di A^{\pmb\xi}_s\right],		
		\end{align}
	where $\MVM^t(\xi)$ 
	is the set of continuous adapted MVMs with $\pmb\xi_t=\xi$ 
	which are independent of $\F_t$. Notably, this function remains unaltered if relaxing the condition of independence of $\F_t$ (cf. \eqref{eq:pseudo_final} below and e.g. Proposition 2.4 in \cite{claisse2016}). 
	We stress that the integral in \eqref{eq:value-function_app} is taken from zero rather than $t$, which implies that for a general point in $S$, the function $\tilde v$ differs from the value-function introduced in \eqref{eq:value-function} in that a potential atom at time $t$ contributes to its value. However, for any point $(t,x,\xi)\in S_0:=\{(t,x,\xi)\in S:\supp\xi\subseteq(t,\infty)\}$, we have that it coincides with the value function; i.e. then $\tilde v(t,x,\xi)=v(t,x,\xi):=\sup_{\pmb\xi\in\MVM^t(\xi)}\E[\int_t^\infty f\left(X^{t,x}_s\right)\di A^{\pmb\xi}_s]$.

	We let $l_\tau$ denote the left end-point of the support of $\tau$, i.e. $l_\tau:=\sup\{t\ge 0: \P(\tau<t)=0\}$, and for any $\varepsilon>0$ and $\eta\in\Pc$, we write $\Tc_\varepsilon(\eta):=\cup_{\xi\in B_\varepsilon(\eta)}\Tc(\xi)$. 
	We may now prove the DPP under the following additional assumption\footnote{
	For the cost functions given in Example \ref{ex:ex_special_form}, $\varphi$ is independent of $\tau$, and thus the last part of Assumption \ref{ass:app} holds trivially. 
	Moreover, for the case when $X_\cdot$ is homogeneous in time, or $\theta$ takes its values in a countable subset, the second last assumption can be weakened to $x\mapsto\E[f(X^{t,x}_{\tau})]$ lower-semicontinuous for any fixed $t>0$, $\tau\in\Tc^t$. In this case the proof simplifies even further since it is then sufficient to cover $\R^n\times\Pc$ as opposed to $S$.}:
		
	\begin{assumption}\label{ass:app}
		Suppose that the cost function $c$ is of Markov type with $f$ locally bounded; 
		that $\tilde v$ is continuous on $S$; 
		and that for any $\eta\in\Pc$, there exists $\varepsilon>0$ and a modulus of continuity $\varphi$ such that:
		$(t,x)\mapsto\E[f(X^{t\wedge l_\tau,x}_{\tau})]$ is continuous on $\R_+\times\R^n$ uniformly w.r.t. $\tau\in\Tc_\varepsilon(\eta)$,
		and $c$ satisfies Assumption \ref{ass:special_form} w.r.t. $\varphi$ for all $\tau\in\Tc_\varepsilon(\eta)$. 
	\end{assumption}

	\begin{proof}[Proof of Theorem \ref{thm:dpp} under Assumption \ref{ass:app}:]	
		Given $\delta>0$, for $(s,z,\eta)\in S_0$, let $s_0>s$ such that $\eta((0,s_0])<\delta$, and let $\bar t\in(s,\frac{s+s_0}{2})$. In turn, define the open set
			\begin{equation*}
				\Ac^{(s,z,\eta)}:=(s-\delta,\bar t)\times B_\delta(z)
				\times\left\{\xi\in\Pc:\Wc_1(\eta,\xi)<\delta(\bar t-s)\right\}.
			\end{equation*}		
		For any $(t,x,\xi)\in\Ac^{(s,z,\eta)}$, it then holds (writing $\delta$ for any multiple of $\delta$) that $\xi((0,\bar t])<\delta$ and thus $\Wc_1(\xi^{|_{\bar t}},\xi)<\delta$. 
		In consequence, there exists $\bar\xi\in \Pc$ with $\supp\bar\xi\subseteq [\bar t,\infty)$, such that $\xi^{|_{\bar t}}\in\Rc(\bar\xi)$ and $\Wc_1(\bar\xi,\xi^{|_{\bar t}})<\delta$, for any $(t,x,\xi)\in\Ac^{(s,z,\eta)}$; in particular, $\Wc_1(\bar\xi,\eta)<\delta$ and $(\bar t,x,\bar\xi)\in S$.
		It follows that for any $\varepsilon>0$, by choosing $\delta\vee\bar t$ small enough, we may ensure that for all $(t,x,\xi)\in\Ac^{(s,z,\eta)}$, 
		\begin{enumerate}
			\item[(\emph{i})]\label{item:1} 
			$|t-\bar t|\vee\bar\varphi\big(\Wc\big(\xi^{|_{\bar t}},\bar \xi\big)\big)<\varepsilon$ 
			and 
			$\xi((0,\bar t])\big(|f(x)|\vee |\tilde v(t,x,\xi)|\vee 1\big)<\varepsilon$,
		\end{enumerate}
		where $\bar\varphi$ is a modulus of continuity such that \eqref{eq:ass_special_form} holds for $\rho\ge \tau\in\Tc(\xi)$, for all $\xi\in\Ac^{(s,z,\eta)}|_{\Pc}$. 
		Moreover, using the continuity properties ensured by Assumption \ref{ass:app}, and choosing if necessary $\delta\vee\bar t$ even smaller, we may ensure\footnote{The reason we need to a-priori assume continuity as opposed to only semi-continuity, is that we need properties (i)-(iii) for the reference point $(\bar t,z,\bar \xi)$ (as opposed to $(s,z,\eta)$). One way to circumvent this would be to consider the topology on $\Pc$ generated by open sets of the form $B_\delta(\eta):=\left\{\xi\in\Pc: \xi\in\mathcal{R}(\eta),\;\Wc_1(\eta,\xi)<\delta\right\}$; it is however not clear to the authors whether $S$ is $\sigma$-compact or even Lindel\"of under this topology.} that also the following properties hold on $\Ac^{(s,z,\eta)}$: 
		\begin{enumerate}
			\item[(\emph{ii})]\label{item:2} $\left|\E\left[\int f(X^{t,x}_u)-f(X^{\bar t,z}_u)\di A^{\pmb\xi}_u\right]\right|\le\varepsilon$, for any $\pmb\xi\in\MVM^{\bar t}(\bar\xi)$; 
			\item[(\emph{iii})]\label{item:3} $|\tilde v(t,x,\xi)-\tilde v(\bar t,z,\bar \xi)|\le\varepsilon$. 
		\end{enumerate}

	The collection of sets $\{\Ac^{(s,z,\eta)}:(s,z,\eta)\in S_0\}$ provides an open cover of $S_0$. Further, since $S_0$ is a subspace of a Polish space and therefore Lindel\"of, this cover admits a countable subcover; we denote the latter by $(B_i)_{i\in\Nb}$, and for each $i\in\Nb$, we define the associated reference point via the identification $(t_i,x_i,\xi_i)\;\widehat =\; (\bar t,z,\bar\xi)$ and let $\varphi_i \;\widehat =\; \bar\varphi$; notably the reference points lie in $S$ but not necessarily in $S_0$. 
	We then obtain a measurable partition of $S_0$ as follows: 
			\begin{equation*}
				A_1=B_1\cap S_0,\quad A_{j+1}=\left(B_{j+1}\setminus(B_1\cup\dots \cup B_j)\right)\cap S_0,\quad j\ge 1. 
			\end{equation*}
	Further, we define an associated family of MVMs $(\pmb\xi^i)_{i\in\Nb}$ such that $\pmb\xi^i\in\MVM^{t_i}(\xi_i)$ and 
		\begin{align*}
			\E\Big[\int f\left(X^{t_i,x_i}_s\right)\di A^{\pmb\xi^i}_s\Big]
			\ge \tilde v(t_i,x_i,\xi_i)-\varepsilon. 
		\end{align*}
		
	For a general $(t,x,\xi)\in A_i$, let $\Gamma^{i,\xi}$ such that $\Wc(\xi_i,\xi^{|_{t_i}})=\iint |s-u|\Gamma^{i,\xi}(\di s,\di u)$; we denote by $m^{i,\xi}(s,\cdot)$ the family of disintegrated kernels for which $\Gamma^{i,\xi}(\di s,\di u)=\xi_i(\di s)m^{i,\xi}(s,\di u)$. We then define $\pmb\xi^{i,\xi}$ as follows: $\pmb\xi^{i,\xi}_{(t\vee\cdot)\wedge t_i}(A):=\xi(A)$, and 
		\begin{align*}
			\pmb\xi^{i,\xi}_{\cdot\vee t_i}(A)
			:=\xi\big(A\cap(t,t_i]\big)
			+\xi\big((t_i,\infty)\big)\int \pmb\xi^i_\cdot(\di u)m^{i,\xi}\big(u,A\big), 
			\;\; A\in\Bc(\R_+).
		\end{align*}
	We note that this indeed defines a MVM in $\MVM^t(\xi)$.
	By use of Assumption \ref{ass:special_form} we then obtain
			\begin{align*}
				\Big|\E\Big[\int_t^{t_i} f\left(X^{t,x}_s\right)-f\left(x\right)\xi(\di s)\Big]\Big|
				\le \varphi_i\Big(\int_t^{t_i} \left|s-t\right|\xi(\di s)\Big)
				\le \varphi_i\big(\xi((t,t_i])|t-t_i|\big),
			\end{align*}
	which using that $(t,x,\xi)\in A_i$ and property (i) implies $\E[\int_t^{t_i} f(X^{t,x}_s)\pmb\xi^{i,\xi}(\di s)]\ge \xi((t,t_i])f(x)-\varepsilon>-\varepsilon$.
	Moreover, using again Assumption \ref{ass:special_form} and (i), we also obtain (cf. \eqref{eq:proof_cont}), 
			\begin{align*}
				\bigg|\E\Big[\iint  &f\left(X^{t,x}_u\right)\pmb\xi^i(\di s)m^{i,\xi}\big(s,\di u\big)
				-\int f\left(X^{t,x}_s\right)\pmb\xi^i(\di s)\Big]\bigg|\\
				&\le 
				\varphi_i\left(\E\Big[\Big|\iint u-s\; \pmb\xi^i(\di s)m^{i,\xi}\big(s,\di u\big)
				\Big|\Big]\right)\\
				&\le\varphi_i\left(\iint |u-s|m^{i,\xi}(s,\di u)\xi_i(\di s)\right)
				\;\le\; \varphi_i\left(\Wc\left(\xi_i,\xi^{|_{t_i}}\right)\right)<\varepsilon.
			\end{align*} 
	On the other hand, using property (ii) we obtain
			\begin{align*}	
				\E\Big[\int f\left(X^{t,x}_s\right)\pmb\xi^i(\di s)\Big]
				\ge 
				\E\Big[\int f\left(X^{t_i,x_i}_s\right)\pmb\xi^i(\di s)\Big]-\varepsilon
				\ge 
				\tilde v(t_i,x_i,\xi_i)-\varepsilon.
			\end{align*}	
	Further, recall that by (iii), $\tilde v(t_i,x_i,\xi_i)\ge v(t,x,\xi)-\varepsilon$.
	Combining the above and using once again (i), we thus obtain that for $(t,x,\xi)\in A_i$, 
			\begin{align}\label{eq:proof_dpp_no_meas_bound}	
				\E\Big[\int f\left(X^{t,x}_s\right)\pmb\xi^{i,\xi}(\di s)\Big]
				\ge 
				\xi\big((t_i,\infty)\big)
				(v(t,x,\xi)-\varepsilon)-\varepsilon
				\ge
				v(t,x,\xi)-\varepsilon.
			\end{align}

		We may now conclude in a standard way. First, we may, w.l.o.g., take $(t,x,\xi)=(0,x,\mu)$. 
		Given $\pmb\xi\in\MVM(\mu)$, we then introduce the modified MVM $\pmb\xi^\varepsilon$ as follows: 
		$\pmb\xi^\varepsilon_{\cdot\wedge \theta}:=\pmb\xi_{\cdot\wedge\theta}$, and
			\begin{align*}
				\pmb\xi^\varepsilon_{\cdot\vee\theta}(A) :=
				\pmb\xi_\theta\big(A\cap(0,\theta]\big)
				+\pmb\xi_\theta\big((\theta,\infty)\big)
				\sum_{i\in\Nb}
				\ind_{\big\{\big(\theta,X^x_\theta,\pmb\xi^{|_\theta}_\theta\big)\in A_{i}\big\}}
				\pmb\xi^{i,\pmb\xi^{|_\theta}_\theta}_{\cdot\vee\theta}(A),
				\;A\in\Bc(\R_+).
			\end{align*}
		We note that the thus defined process is a well-defined continuous and adapted MVM with $\pmb\xi^\varepsilon_0=\mu$.
		Moreover, by use of \eqref{eq:proof_dpp_no_meas_bound}, the fact that $X$ is a Markov process and that $\pmb\xi^i$ is independent of $\F_{t_i}$ (this is why we need $t\le t_i$ for $(t,x,\xi)\in A_i$), we obtain the following on $\{(\theta,X^x_\theta,\pmb\xi^{|_\theta}_\theta)\in A_i\}$:  
			\begin{align*}
				\E\Big[\int f\left(X^x_s\right)\pmb\xi^{i,\pmb\xi^{|_\theta}_\theta}(\di s)|\F_\theta\Big]
				=\E\Big[\int f\left(X^{t,x}_s\right)\pmb\xi^{i,\xi}(\di s)\Big]_{\substack{(t,x,\xi)\;=\\ \big(\theta,X^x_\theta,\pmb\xi^{|_\theta}_\theta\big)}}
				\ge v\left(\theta,X^x_\theta,\pmb\xi^{|_\theta}_\theta\right)-\varepsilon. 
			\end{align*}			
		Hence, we obtain 
			\begin{align*}
				&v(0,x,\mu)
				\ge 
				\E\left[\int_0^\infty f(X^x_s)\di A^{\pmb\xi^\varepsilon}_s\right]\\
				& = \E\bigg[\int_0^\theta f(X^x_s)\pmb\xi_\theta(\di s)
				+\pmb\xi_\theta\big((\theta,\infty)\big)
				\sum_{i\in\Nb}\ind_{\left\{\left(\theta,X^x_\theta,\pmb\xi^{|_\theta}_\theta\right)\in A_{i}\right\}}
				\E\Big[\int f\left(X^x_s\right)\pmb\xi^{i,\pmb\xi^{|_\theta}_\theta}(\di s)|\F_\theta\Big]\bigg]\\	
				& \ge 
				\E\bigg[\int_0^\theta f(X^x_s)\pmb\xi_\theta(\di s)
				+\pmb\xi_\theta\big((\theta,\infty)\big)
				v\left(\theta,X^x_\theta,\pmb\xi^{|_\theta}_\theta\right)\bigg]-\varepsilon.
			\end{align*}		
		Since $\varepsilon>0$ and $\pmb\xi\in\MVM(\mu)$ were both arbitrarily chosen, this yields the first inequality.

		In order to show the reverse inequality, let $\pmb\xi\in\MVM(\mu)$ and let $\theta$ be a finite $\Fb$-stopping time. We note that in our case the so-called pseudo Markov property (cf. \cite{claisse2016}) holds trivially, and for almost all $\omega\in\Omega$ we have that
			\begin{align}\label{eq:pseudo_final}
				\E\Big[\int_\theta^\infty f(X^x_s)\pmb\xi^{|_\theta}(\ds)|\Fc_\theta\Big](\omega)
				&=		\int_\Omega\int_{\theta(\omega)}^\infty f\left(X^{\theta(\omega),X^x_{\theta(\omega)}}_s(\tilde\omega)\right)\pmb{\tilde\xi}^{\theta(\omega),\omega}(\tilde\omega;\ds)\di \mathbb{W}(\tilde\omega)\nonumber\\
				&\le 
				v\left(\theta(\omega),X^x_{\theta(\omega)}(\omega),\pmb\xi^{|_{\theta(\omega)}}_{\theta(\omega)}(\omega)\right),
			\end{align}			 
		where 
				$\pmb{\tilde\xi}^{\theta(\omega)\omega}_u(\tilde\omega)
				=
				\pmb\xi^{|_{\theta(\omega)}}_{\theta(\omega)}(\omega)\ind_{\{u<\theta(\omega)\}}
				+
				\pmb\xi^{|_{\theta(\omega)}}_u\left(\omega*_{\theta(\omega)}\tilde\omega\right)\ind_{\{u\ge\theta(\omega)\}}$				
		with the concatenated path given by $\omega*_t\tilde\omega(s)=\ind_{\{0\le s< t\}}\omega(s)+\ind_{\{t\le s\}}(\omega(t)+\tilde\omega(s)-\tilde\omega(t))$; indeed, for $\omega\in\Omega$ fixed, $\pmb{\tilde\xi}^{\theta(\omega),\omega}(\cdot)$ is independent of $\Fc_\theta$ and thus lies in $\MVM^{\theta(\omega)}\big(\pmb\xi^{|_{\theta(\omega)}}_{\theta(\omega)}(\omega)\big)$. 
		Hence, we obtain
			\begin{align*}
				\E\Big[\int_0^\infty f(X^x_s)\di A^{\pmb\xi}_s\Big]
				&=
				\E\Big[\int_0^\theta f(X^x_s)\pmb\xi_\theta(\ds)+\pmb\xi_\theta\big((\theta,\infty)\big)\E\Big[\int_\theta^\infty f(X^x_s)\pmb\xi^{|_\theta}(\ds)|\Fc_\theta\Big]\Big]\\
				&\le 
				\E\Big[\int_0^\theta f(X^x_s)\pmb\xi_\theta(\ds)+\pmb\xi_\theta\big((\theta,\infty)\big)v\left(\theta,X^x_\theta,\pmb\xi^{|_\theta}_\theta\right)\Big],
			\end{align*}
		which completes the proof since $\pmb\xi$ and $\theta$ were arbitrarily chosen. 
	\end{proof}

\bibliography{refs}
\bibliographystyle{plaintest}

\end{document}